\numberwithin{equation}{chapter}
\setlist[itemize]{font = \upshape, before = \leavevmode}
\setlist[enumerate]{font = \upshape, before = \leavevmode}
\setlist[description]{font = \bfseries\sffamily, before = \leavevmode}
\DeclareSymbolFont{extraup}{U}{zavm}{m}{n}
\DeclareMathSymbol{\vardiamond}{\mathalpha}{extraup}{87}
\declaretheoremstyle[headfont   = \bfseries\sffamily,
                     notefont   = \normalfont,
                     bodyfont   = \itshape,
                     spaceabove = 6pt,
                     spacebelow = 6pt]{plain}
\declaretheoremstyle[headfont   = \bfseries\sffamily,
                     notefont   = \normalfont,
                     spaceabove = 6pt,
                     spacebelow = 6pt]{definition}
\declaretheorem[style = plain, numberwithin = chapter]{theorem}
\declaretheorem[style = plain,      sibling = theorem]{corollary}
\declaretheorem[style = plain,      sibling = theorem]{lemma}
\declaretheorem[style = plain,      sibling = theorem]{proposition}
\declaretheorem[style = definition, sibling = theorem]{definition}
\declaretheorem[style = definition, sibling = theorem, qed = \(\vardiamond\)]{example}
\declaretheorem[style = definition, sibling = theorem, qed = \(\spadesuit\)]{remark}
\DeclarePairedDelimiter{\set}{\lbrace}{\rbrace} 
\DeclarePairedDelimiter{\ip}{\langle}{\rangle}
\DeclareMathOperator{\rank}{rank}
\DeclareMathOperator{\corank}{corank}
\DeclareMathOperator{\Sec}{Sec}
\DeclareMathOperator{\Sing}{Sing}
\DeclareMathOperator{\Bl}{Bl}
\newcommand{\R}{\mathbb{R}}   
\newcommand{\C}{\mathbb{C}}   
\renewcommand{\P}{\mathbb{P}} 
\newcommand{\V}{\mathcal{V}\mkern-1mu}
\newcommand{\y}{\mathbf{y}}
\newcommand{\point}[1]{rank-\(#1\)-point}
\newcommand{\locus}[1]{rank-\(#1\)-locus}
\newcommand{\node}[1]{rank-\(#1\)-node}
\newcommand{\quadric}[1]{rank-\(#1\)-quadric}
\newcommand{\baselocus}{base locus\xspace}
\newcommand{\baseloci}{base loci\xspace}
\newcommand{\basepoint}{base point\xspace}
\newcommand{\basepoints}{base points\xspace}
\newcommand{\MXkern}{\ensuremath{\mkern-7mu}}
\newcommand{\Pnkern}{\ensuremath{\mkern-4mu}}
\newcommand{\Pkern}{\ensuremath{\mkern-3mu}}
\newcommand{\Vkern}{\ensuremath{\mkern-4mu}}
\newcommand{\Wkern}{\ensuremath{\mkern-4mu}}
\newcommand{\Skern}{\ensuremath{\mkern-1mu}}
\newcommand{\Ckern}{\ensuremath{\mkern-2mu}}
\newcommand{\Hkern}{\ensuremath{\mkern-2mu}}
\newcommand{\Xkern}{\ensuremath{\mkern-2mu}}
\newcommand{\Ykern}{\ensuremath{\mkern-4mu}}
\newcommand{\dash}{\textthreequartersemdash\xspace}
\newcommand{\bezout}{B{\'e}zout's theorem\xspace}
\definecolor{uiolink}{HTML}{0B5A9D}
\title{Maximality of quartic symmetroids\par with a double quadric of codimension 1}
\author{Martin Helsø}
\date{}
\begin{document}

\maketitle

\begin{abstract}
    \noindent
    We prove that the dimension of a quartic symmetroid singular along a quadric of codimension 1 is at most 4,
    if it is not a cone.
    In the maximal case, the quadric is reducible and consists of rank-3-points.
    If the quadric is irreducible,
    it consists of rank-2-points
    and the symmetroid is at most 3-dimensional.
\end{abstract}

\chapter{Introduction}

\noindent
In semidefinite programming,
the objective is to optimise a linear function
over the intersection of the cone of positive semidefinite matrices
in the space of real, symmetric $(d \times d)$-matrices with an affine subspace.
The feasible region of a semidefinite program is called a \emph{spectrahedron}.
Spectrahedra are important, elementary objects in convex algebraic geometry \cite{BPT13}.

Quartic spectrahedra is the case of $(4 \times 4)$-matrices intersected with an affine space
that contains a positive definite matrix.
The affine space is identified with $\R^n$\Pnkern.
The algebraic boundary of a spectrahedron in $\R^n$ is a hypersurface~$\V(f)$ in $\C\P^n$\Pnkern. 
The polynomial~$f$ is the determinant of a matrix
\begin{equation}
    \label{eq:matrix-rep}
    A(x) \coloneqq A_0x_0 + A_1x_1 + \cdots + A_nx_n,
\end{equation}
where $A_i$ is a real, symmetric $(4 \times 4)$-matrix.
Classically, the hypersurface~$\V(f)$ is called a \emph{symmetroid} \cite{Cay69, Dol12, Jes16}.
Given a matrix~$A(x)$ as in \eqref{eq:matrix-rep},
with $f \coloneqq \det(A(x))$,
the \emph{associated spectrahedron} is the set
\begin{equation*}
    S(f) \coloneqq \set{x \in \R\P^n \mid A(x) \text{ is semidefinite}}.
\end{equation*}
We say the symmetroid~$\V(f)$ is \emph{spectrahedral} or has a \emph{nonempty spectrahedron}
if $S(f)$ contains a positive definite matrix.

For a given dimension~$n$,
the generic quartic symmetroid $\V(f) \subset \C\P^n$ has a fixed number of singularities.
Some of the singularities may be real,
and some of the real singularities may lie on the topological boundary of the spectrahedron.
For generic quartic symmetroids in $\C\P^3$ with a nonempty spectrahedron,
the possible arrangements of singularities are characterised in \cite{DI11}.
This result is recovered in \cite{Ott+14}, using an algorithmic proof.
The singular loci of rational quartic symmetroids in $\C\P^3$ are described in \cite{Hel17}.
The possible locations of the singularities of rational quartic symmetroids with a nonempty spectrahedron are further specified by \cite{HR18}.

Rational quartic symmetroids in $\C\P^3$ include quartic surfaces with a double conic section.
In this paper, we consider the analogous situation in higher dimensions.
Our main result is:

\begin{theorem}
    Let $S \subset \P^n$ be an irreducible, quartic, symmetroidal hypersurface
    which is not a cone.
    Assume that $Q$ is an $(n - 2)$-dimensional, quadratic component
    in $\Sing(S)$.
    Then $n \leqslant 4$ if $Q$ is irreducible and $n \leqslant 5$ otherwise.
\end{theorem}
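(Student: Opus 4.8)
The plan is to recast the statement in terms of linear systems of quadric surfaces in $\P^{3}$ and to study the symmetroid after restricting to the linear span of $Q$. Write $S = \V(\det A)$ for a symmetric $(4\times 4)$-matrix $A(x) = A_{0}x_{0} + \dots + A_{n}x_{n}$ of linear forms, so that $S$ is the discriminant of the $n$-dimensional linear system $\Lambda$ of quadrics in $\P^{3} = \P(\C^{4})$ spanned by the $A_{i}$. I will use the standard facts that $\Sing(S)$ contains $\set{x : \rank A(x) \leqslant 2}$ and that a point $x$ with $\corank A(x) = 1$ lies in $\Sing(S)$ precisely when its kernel line is a base point of $\Lambda$. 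Since $\dim Q = n - 2 = \dim S - 1$, the quadric $Q$ spans a hyperplane $H \cong \P^{n-1}$, and $A|_{H}$ is injective, for otherwise $\det A$ is independent of a variable and $S$ is a cone.

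The key structural input is that the hyperplane section $S \cap H$ is a double quadric. A tangent-cone computation at a general point of $Q$ shows $\det(A|_{H})$ vanishes to order exactly two along each component of $Q$: at least two because $Q \subseteq \Sing S$, and not three or more because $\det(A|_{H})$ has degree at most four, $H$ is not contained in $S$, and $Q$ has degree two. Hence $\det(A|_{H}) = cQ^{2}$ with $c \neq 0$: the discriminant of $\Lambda|_{H}$ is a perfect square. The analysis then splits according to the generic rank $\rho \in \set{2,3}$ of $A$ along $Q$. When $\rho = 2$, the members of $\Lambda|_{H}$ over $Q$ are pairs of planes; studying the two plane-families — equivalently the singular-line map $Q \dashrightarrow \P^{9}$ into the \locus{2}, whose maximal linear subspaces are the $3$-planes $h \odot \C^{4}$ — and using that a positive-dimensional quadric is covered by lines, a dimension count gives $n \leqslant 4$ (indeed $n \leqslant 3$); one checks that in this case $Q$ is irreducible, recovering the finer statement.

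When $\rho = 3$, the perfect square forces a line $L \subset \P^{3}$ into the base locus of $\Lambda|_{H}$; in coordinates with $L = \set{y_{1} = y_{2} = 0}$ this yields $A(x) = \bigl(\begin{smallmatrix} P(x) & R(x) \\ R(x)^{T} & x_{n}C_{0} \end{smallmatrix}\bigr)$ with $P, R$ linear, $C_{0}$ a constant symmetric matrix, and $C_{0} \neq 0$ (else $\det A = \det(R)^{2}$ and $S$ is not reduced), together with $Q = \V\!\bigl(\det(R|_{H})\bigr)$ — a quadric of rank at most four. If $Q$ is irreducible, one shows it is necessarily a smooth quadric (an irreducible singular $Q$ would force $S$ to be a cone), hence of rank $n$, so rank $\leqslant 4$ gives $n \leqslant 4$. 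If $Q$ is reducible, it is a pair of hyperplanes $\Pi_{1} \cup \Pi_{2}$, whose singular locus is a linear $\P^{n-3}$ contained in $\Sing S$; feeding this back into the Schur-complement expansion $\det A = x_{n}^{2}\det P - x_{n}\,\gamma + \det(R)^{2}$ and using that $\det A$ is irreducible and not a cone bounds $n \leqslant 5$.

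I expect the principal difficulty to be the passage from the perfect-square determinant to the normal forms above: namely, showing that the singular lines of the quadrics parametrized by $Q$ do not genuinely vary — producing the base line $L$ when $\rho = 3$, and controlling the pair of plane-families when $\rho = 2$ — so as to exclude \enquote{exotic} square discriminants. This, along with the lemmas identifying when a singular $Q$ forces $S$ to be a cone, is where the geometry of the \locus{2} in $\P^{9}$ and of $Q$ as a quadric enters most heavily, and it is the point at which the dichotomy between irreducible $Q$ (carrying \point{2}s, with $n \leqslant 4$) and reducible $Q$ (admitting the extremal double quadric of \point{3}s, with $n = 5$) is decided.
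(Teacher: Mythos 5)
Your framework \dash restricting to the hyperplane $H$ spanned by $Q$, observing that $\det(A|_H)=cQ^2$, and stratifying by the generic rank $\rho$ along $Q$ \dash is sound and close in spirit to the paper's, which likewise works with the linear system $W(S)$ and its \baselocus. But the proposal has genuine gaps exactly where the paper does its real work. The step you flag as ``the principal difficulty'' is not a technicality: the implication ``perfect-square discriminant $\Rightarrow$ a line in $\Bl(\Lambda|_H)$'' \emph{is} the content of the $\rho=3$ case, and the paper obtains it not from the square but from \cref{lem:common-singularity} (a \emph{linear} space of \point{3}s forces a common singular point of the associated quadrics, which is a \basepoint), which rests on \cref{lem:basepoint-singularity,lem:base-curve}; no such statement is available for a non-linear family of \point{3}s, which is why the paper first proves in \cref{lem:quadric-in-rank-2} that an irreducible double quadric consists of \point{2}s, making your ``$\rho=3$, $Q$ irreducible'' branch vacuous rather than something to be handled by an unproved ``singular irreducible $Q$ forces a cone'' claim. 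Likewise, the bound $n\leqslant 5$ does not follow from the Schur-complement identity as stated \dash the number of variables never enters it. The paper's \cref{prop:max-union} gets it from a concrete incidence count in $\P^9$: the two $4$-spaces $Y_i$ of quadrics singular at $p_i$ meet in a plane, so they span a $6$-space all of whose members contain the line $\overline{p_1, p_2}$, and \cref{lem:base-curve} then forbids $W(S)$ from filling that $6$-space.

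Your $\rho=2$ branch moreover contains a false claim. \cref{ex:max-smooth-quadric} exhibits quartic symmetroids in $\C\P^4$ singular along a smooth quadric surface all of whose points have rank $2$, so a dimension count yielding ``$n\leqslant 3$'' in the $\rho=2$ case cannot be right; and \cref{ex:double-plane} gives a double plane of \point{2}s in $\C\P^4$, so $\rho=2$ does not force $Q$ irreducible. The correct statement is more delicate: \cref{prop:max-sing-quadric} shows that a \emph{singular} quadratic component of the \locus{2} with only finitely many \point{1}s forces $n\leqslant 3$ (using \cref{lem:rank-2-pencil,lem:hyperplane-in-base} and the geometry of the four \basepoints), and \cref{cor:max-smooth} reaches $n\leqslant 4$ for smooth $Q$ by first excluding a surface of \point{1}s via a secant-variety argument and then cutting with a hyperplane containing a tangent space to reduce to the singular case. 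Your plane-family analysis would have to reproduce this case division \dash in particular the role of the \point{1}s on $Q$ \dash and as sketched it does not.
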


\noindent
This is proved by combining \cref{cor:max-smooth,prop:max-union}.
The bounds are only valid for symmetroids which are not cones,
as \eqref{eq:matrix-rep} defines a symmetroid in $\C\P^{m}$ for any $m \geqslant n$.

The rest of the paper is organised as follows:
\cref{sec:prelim} introduces notation
and basic facts about symmetroids and linear systems of quadrics.
\cref{sec:max-irred} demonstrates that if $S$ is a symmetroid with an irreducible, double quadric~$Q$ of codimension~$1$,
then points in $Q$ have at most rank~$2$ and $\dim(Q) \leqslant 2$.
\cref{sec:threefolds} deals with the case where $S$ is a threefold.
First we find the additional singularities outside of $Q$.
Then we describe the different configurations of singularities when $S$ is spectrahedral.
In \cref{sec:threefolds-cyclides},
we argue that $Q$ must contain real points
if $S$ is a threefold.
\cref{sec:max-red} shows that if we allow $Q$ to be reducible with \point{3}s,
then $\dim(Q) \leqslant 3$.
\cref{sec:fourfolds} determines the number of additional singularities
outside of $Q$ when $S$ is a fourfold,
and examines them further in the spectrahedral case.

\chapter{Preliminaries}
\label[section]{sec:prelim}

Here we present our main tool,
the linear system of quadrics associated to a symmetroid,
and sundry useful results.

First,
note that throughout the paper,
we assume that the symmetroids under consideration are not cones.

Our notation is as follows:
Let $S \subset \C\P^n$ be a quartic symmetroid with representation~\eqref{eq:matrix-rep}.
For $x \in \C\P^n$ and $\y \coloneqq [y_0, y_1, y_2, y_3]$,
let $q(x) \coloneqq \y^T\mkern-7mu A(x) \y$.
Then $Q(x) \coloneqq \V(q(x)) \subset \C\P^{3}$ is a quadratic surface.
The set
\begin{equation*}
W(S) \coloneqq \big\{ Q(x) \mid x \in \P^n \big\}
\end{equation*}
is the \emph{associated linear system of quadrics} of $S$\Skern.
For a subset $U \subset S$,
we let
\begin{equation*}
W(U) \coloneqq \big\{ Q(x) \mid x \in \overline{U} \big\} \subset W(S).
\end{equation*}
For a quadric $Q \subset \C\P^3$\Pnkern, let $[Q]$ denote the corresponding point in $W(S) = \C\P^n$\Pnkern.
For a space~$W$ of quadrics, the \baselocus is written as $\Bl(W)$.
For a subset $\Bl(W) \subset \P^3$\Pnkern,
we let $X_{\Bl(W)} \subset \P^9$ denote the space of all quadrics containing $\Bl(W)$.

The \emph{rank} and \emph{corank} of a point $x \in \C\P^n$ are defined as $\rank A(x)$ and $\corank A(x)$, respectively.
The \emph{\locus{k}} of~$S$ \dash or of $W(S)$ \dash is the set of points with rank less than or equal to $k$.
If $S$ is a symmetroid of degree~$d$, then the \locus{(d - 2)} is contained in $\Sing(S)$,
but equality does not always hold.
The following well-known connection between the \baselocus of $W(S)$ and the singular \point{(d - 1)}s can be found in \cite[Lemma~1.1]{Wal81} and \cite[Lemma~2.13]{Ili+17}:

\begin{lemma}
    \label{lem:basepoint-singularity}
    Let $S \subset \P^n$ be a symmetroid.
    If $[Q] \in \P^n$ is a point such that $Q$ is a singular quadric with a singularity at a point $p \in \Bl(W(S))$,
    then $[Q] \in \Sing(S)$.
    If $[Q] \in \Sing(S)$ is a co\point{1} and $p$ is the singular point of $Q$,
    then $p \in \Bl(W(S))$.
\end{lemma}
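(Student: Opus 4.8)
The strategy is to work directly with the local structure of the determinantal hypersurface $S = \V(\det A(x))$ near the point $[Q]$, using the classical formula for the derivative of a determinant in terms of adjugates. Write $f(x) \coloneqq \det A(x)$. For the first implication, suppose $Q = Q([Q])$ is singular at $p \in \Bl(W(S))$. Singularity of the quadric $Q$ at $p$ means exactly that $p$, viewed as a column vector, lies in the kernel of the symmetric matrix $A([Q])$; being a \basepoint of $W(S)$ means $p$ lies on every quadric $Q(x)$, i.e.\ $p^T A(x)\, p = 0$ identically in $x$. I would like to conclude that all first partials $\partial f/\partial x_i$ vanish at $[Q]$. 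The key tool is the identity $\partial f/\partial x_i = \operatorname{tr}\!\big(\operatorname{adj}(A(x))\, A_i\big)$, valid wherever one differentiates the determinant of the affine matrix pencil.

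The main case is when $\corank A([Q]) = 1$, which I would treat first. Then $\operatorname{adj}(A([Q]))$ is a rank-1 symmetric matrix, and since its column space is the kernel of $A([Q])$, which is spanned by $p$, we get $\operatorname{adj}(A([Q])) = \lambda\, p\, p^T$ for some scalar $\lambda \neq 0$. Therefore
\[
    \frac{\partial f}{\partial x_i}\bigg|_{[Q]} = \operatorname{tr}\!\big(\lambda\, p\, p^T A_i\big) = \lambda\, p^T\! A_i\, p .
\]
Now the \basepoint condition $p^T A(x)\, p = \sum_i x_i\, (p^T\! A_i\, p) = 0$ for all $x$ forces $p^T\! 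A_i\, p = 0$ for every $i$, hence every partial of $f$ vanishes at $[Q]$, so $[Q] \in \Sing(S)$. For higher corank the adjugate vanishes identically, so $f$ has vanishing partials trivially (indeed the whole \locus{(n-1)} lies in $\Sing S$); this disposes of the remaining cases of the first implication and can be dispatched in a line by invoking the general fact that the corank-$\geqslant 2$ locus is contained in $\Sing(S)$, already noted in the paragraph preceding the lemma.

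For the converse, assume $[Q] \in \Sing(S)$ is a co\point{1}, with $\ker A([Q]) = \langle p\rangle$; I must show $p \in \Bl(W(S))$, i.e.\ $p^T\! A_j\, p = 0$ for all $j$. Since $\corank A([Q]) = 1$, the same adjugate computation gives $\partial f/\partial x_i|_{[Q]} = \lambda\, p^T\! A_i\, p$ with $\lambda \neq 0$, and singularity of $S$ at $[Q]$ says all these partials vanish; dividing by $\lambda$ yields $p^T\! A_i\, p = 0$ for every $i$, which is precisely the statement that $p$ lies on $Q(x)$ for all $x$, i.e.\ $p \in \Bl(W(S))$. The only genuinely delicate point, and the one I would be most careful about, is the nonvanishing and rank-one structure of $\operatorname{adj}(A([Q]))$ in the corank-1 case — this is where the hypothesis "co\point{1}" is used in the converse and must not be omitted, since for a singular point of corank $\geqslant 2$ the adjugate is zero and the argument collapses (which is exactly why the first implication needs the extra hypothesis that $Q$ be singular *at a \basepoint*, rather than merely singular).
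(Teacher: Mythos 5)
Your proof is correct and complete. Note, however, that the paper does not prove this lemma at all: it is stated as well known and attributed to the two references cited immediately before it (Wallace's Lemma~1.1 and Iliev--Kapustka--Kapustka--Ranestad's Lemma~2.13), so there is no in-paper argument to compare against. Your self-contained argument via the Jacobi formula $\partial f/\partial x_i = \operatorname{tr}\big(\operatorname{adj}(A(x))\,A_i\big)$ is the standard route taken in those sources: in the corank-$1$ case the symmetric rank-one adjugate $\operatorname{adj}(A([Q])) = \lambda\, p\,p^T$ with $\lambda \neq 0$ turns every partial derivative into $\lambda\, p^T\! A_i\, p$, which identifies the vanishing of the gradient of $f$ at $[Q]$ with the \basepoint condition for $p$, giving both implications at once; and the corank-$\geqslant 2$ case of the first implication is absorbed by the observation (already made in the paper just before the lemma) that the \locus{(d-2)} lies in $\Sing(S)$. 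You are also right to flag that the rank-one structure and nonvanishing of the adjugate is the one point requiring care, and that its failure in corank $\geqslant 2$ is exactly why the converse carries the co\point{1} hypothesis. The only value the paper's citation adds over your argument is brevity; your version has the advantage of being self-contained and of making explicit where each hypothesis is used.
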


\noindent
The following lemma is useful for eliminating possible symmetroids:

\begin{lemma}[{\cite[Lemma~2.7]{Hel17}}]
    \label{lem:base-curve}
    Let $S \subset \P^n$ be a quartic symmetroid and assume that $\Bl(W(S))$ contains a curve.
    Then $S$ is reducible.
\end{lemma}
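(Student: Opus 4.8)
The plan is to reformulate the hypothesis in terms of the defining forms and then read off an explicit factorisation of $\det A(x)$. Writing $q_i \coloneqq \y^{T}\MXkern A_i\y$, the assumption that the curve lies in $\Bl(W(S))$ means exactly that every $q_i$ vanishes on it. Replacing the curve by one of its irreducible components, I may assume it is an irreducible curve $C$. Since $S$ is not a cone, the matrices $A_0, \dots, A_n$ are linearly independent, and hence so are the forms $q_i$: a symmetric matrix $M$ with $\y^{T}\MXkern M\y \equiv 0$ has all entries zero. Thus $C$ lies on an $(n+1)$-dimensional space of quadrics. The argument then splits according to the linear span $\P(U)$ of $C$ in $\P^3$.

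Suppose first that $C$ is degenerate. If $\P(U)$ is a line, then $C = \P(U)$ and each binary form $q_i|_U$ vanishes on the whole line, hence identically, so the plane $U$ is totally isotropic for every $A_i$; choosing a basis adapted to the decomposition $\C^4 = U \oplus U'$ puts $A(x)$ in the $(2\times 2)$-block form $\left(\begin{smallmatrix} 0 & B(x) \\ B(x)^{T} & D(x)\end{smallmatrix}\right)$, and the determinant of such a matrix equals $\p{\det B(x)}^{2}$, a perfect square, so $S$ is non-reduced. If $\P(U)$ is a plane and $C$ is a conic, then the only conic of $\P(U)$ through the irreducible $C$ is $C$ itself, so $q_i|_U = \lambda_i g$ for a fixed conic $g$; hence $A(x)|_U = \ell(x)\,g$ with $\ell(x) \coloneqq \sum_i \lambda_i x_i$ a nonzero linear form, and for $x \in \V(\ell)$ the $3$-dimensional $U$ is totally isotropic, forcing $\rank A(x) \leqslant 2$ and $\det A(x) = 0$. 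Thus $\ell$ divides $\det A(x)$. A plane curve of degree $\geqslant 3$ would make every $q_i|_U$ vanish, so that $\rank A(x) \leqslant 2$ identically and $\det A(x) \equiv 0$, which is impossible for a genuine quartic. In every degenerate case $S$ is therefore reducible.

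It remains to treat a nondegenerate $C$, and this I expect to be the main obstacle. Here I would invoke that an irreducible nondegenerate curve in $\P^3$ lies on at most three linearly independent quadrics, with equality only for the twisted cubic; since $C$ lies on $n+1$ independent quadrics this forces $n \leqslant 2$ and $C$ a twisted cubic. For the twisted cubic one computes directly that the net of quadrics through it has a symmetric representative whose determinant is the square of the conic parametrising its singular members, so again $\det A(x)$ is a perfect square. The delicate points are thus confined to this case: one must establish the bound on the number of quadrics through a nondegenerate space curve \dash where \bezout controls the degree of $C$ as a component of the intersection of two members of the system \dash and then carry out the twisted-cubic computation, while taking care to decompose a reducible or non-reduced base curve before applying the span argument to an irreducible component. (An alternative route to reducibility, via \cref{lem:basepoint-singularity}, only shows that $\bigcup_{p \in C}\set{x \mid A(x)p = 0} \subseteq \Sing(S)$ has dimension $\geqslant n-2$, which is not by itself enough to contradict irreducibility; this is why I favour the direct factorisation above.)
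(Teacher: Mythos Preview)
The paper does not supply its own proof of \cref{lem:base-curve}; the lemma is quoted from \cite[Lemma~2.7]{Hel17} and used as a black box throughout. There is therefore no in-paper argument to compare your attempt against.

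On its own merits, your argument is sound. The case split on the linear span of an irreducible component~$C$ of the base curve is the natural one, and each branch is handled correctly: for a base line the block shape $\left(\begin{smallmatrix}0 & B\\ B^{T} & D\end{smallmatrix}\right)$ gives $\det A(x) = (\det B(x))^{2}$ after a single block-row swap; for a plane conic the restriction $A(x)|_{U}$ collapses to $\ell(x)G$ with $G$ of rank~$3$, so the $3\times 3$ upper-left block vanishes on $\V(\ell)$ and $\ell \mid \det A(x)$; and a nondegenerate~$C$ forces $n \leqslant 2$ via the classical bound on quadrics through an irreducible space curve, the extremal twisted-cubic net having discriminant $(x_{0}x_{2} - x_{1}^{2})^{2}$ up to coordinates. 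The two points you flag as delicate \dash the bound on independent quadrics through a nondegenerate curve (two such quadrics meet in a degree-$4$ curve, and a complete intersection of type $(2,2)$ lies on only a pencil) and the twisted-cubic computation \dash are standard and your outline of them is accurate. Your closing parenthetical, observing that the route through \cref{lem:basepoint-singularity} alone yields only a singular locus of dimension $\geqslant n-2$ and hence does not directly contradict irreducibility, correctly explains why the explicit factorisation is the cleaner path.
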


\noindent
The next result elaborates on the relationship between singular co\point{1}s and $\Bl(W(S))$ for quartics:

\begin{lemma}
    \label{lem:common-singularity}
    Let $S \subset \P^n$ be an irreducible quartic symmetroid
    and suppose that $H \subseteq \Sing(S)$
    is a linear space of \point{3}s.
    Then the quadrics in $W(H)$ share a common singularity,
    which is a \basepoint for $W(S)$.
\end{lemma}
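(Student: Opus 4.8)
The plan is to use \cref{lem:basepoint-singularity} to convert information about singularities of $S$ into information about $\Bl(W(S))$, and then to exploit the linear structure of $H$. Since every point $[Q] \in H$ has rank exactly $3$, each such $Q$ is a quadric in $\P^3$ with a single singular point, its vertex $p_{[Q]}$; because $H \subseteq \Sing(S)$ and these are co\point{1}s, the second half of \cref{lem:basepoint-singularity} gives $p_{[Q]} \in \Bl(W(S))$ for every $[Q] \in H$. So the map $[Q] \mapsto p_{[Q]}$ sends $H$ into the base locus. The goal is to show this map is constant.

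First I would treat the case $\dim H = 1$, i.e.\ $H$ is a line, and then reduce the general case to it (if the vertex is constant on every line through a fixed point of $H$, it is constant on $H$). Restrict $A(x)$ to the pencil parametrised by $H \cong \P^1$: we get a pencil of $(4\times 4)$-symmetric matrices $A(s,t) = sA' + tA''$, generically of rank $3$, hence $\det A(s,t) \equiv 0$ on the line. The rank-$3$ condition along the whole line means the pencil has a common $1$-dimensional kernel or splits into Segre-type blocks; the key point is that a pencil of symmetric matrices that is \emph{everywhere} of corank $\geqslant 1$ must, after a constant change of coordinates, have a common nonzero vector in its kernel unless its Kronecker normal form forces otherwise. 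I would argue that the alternative Kronecker blocks (e.g.\ two $2\times 2$ Jordan-type blocks) either raise the generic rank above $3$ or make $\det$ vanish with the wrong multiplicity, contradicting that the generic quadric in the pencil is a rank-$3$ quadric. Hence the pencil has a common kernel vector $\v$, and $\v$ spans the vertex of $Q(x)$ for every $[Q]$ on the line. This common vertex $p$ is then a point of $\P^3$ lying on $Q(x)$ for all $x \in \P^n$ — indeed it lies on the generic quadric $Q(x)$ with $x \notin S$ by a limiting/closure argument, so $p \in \Bl(W(S))$.

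For the general case: fix $[Q_0] \in H$ with vertex $p_0$. For any other $[Q_1] \in H$, the line $\ell$ joining them lies in $H$, so by the pencil argument the vertex is constant along $\ell$; in particular $p_1 = p_0$. Thus all quadrics in $W(H)$ have vertex $p_0$, and $p_0 \in \Bl(W(S))$ by the argument above (or directly by \cref{lem:basepoint-singularity}).

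The main obstacle is the linear-algebra step: showing that a pencil of symmetric $4 \times 4$ matrices whose general member has rank exactly $3$ must have a common kernel vector. This requires ruling out the other symmetric Kronecker normal forms for pencils, and I expect to handle it by a short case analysis on the possible block structures — specifically, checking that any block decomposition with generic corank $1$ and no common kernel vector either forces generic rank $\leqslant 2$ (so the general point of $H$ would be a rank-$2$ point, contrary to hypothesis) or forces a nonconstant kernel, in which case $\Bl(W(S))$ would contain the corresponding curve traced by the vertices, and then \cref{lem:base-curve} would contradict irreducibility of $S$. Either way we are forced into the common-kernel case, which is exactly what we want.
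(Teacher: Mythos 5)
Your argument does ultimately close up into a correct proof, but only because of the clause you append at the very end; the Kronecker-normal-form analysis that you present as the main engine rests on a false claim. It is \emph{not} true that a pencil of symmetric $4\times 4$ matrices whose generic member has rank exactly $3$ must have a common kernel vector, nor can the alternatives be excluded by rank or multiplicity considerations alone. The pencil
\begin{equation*}
    A(s,t) \coloneqq
    \begin{bmatrix}
        0 & s & 0 & 0 \\
        s & 0 & t & 0 \\
        0 & t & 0 & 0 \\
        0 & 0 & 0 & s
    \end{bmatrix}
\end{equation*}
has $\det A(s,t) \equiv 0$, generic rank exactly $3$, and kernel spanned by $(t, 0, -s, 0)$, which sweeps out a line in $\P^3$ as $[s:t]$ varies. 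So the ``alternative blocks'' are not ruled out by linear algebra: what rules them out is the geometric hypothesis that $H \subseteq \Sing(S)$ with $S$ irreducible. Your sentence ``either way we are forced into the common-kernel case'' is true, but only by virtue of the base-curve fallback, never by the normal-form analysis.

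That fallback is in fact the paper's entire proof, and it makes the rest of your argument (the reduction to pencils, the normal forms) unnecessary. By \cref{lem:basepoint-singularity}, every vertex $p_{[Q]}$ with $[Q] \in H$ lies in $\Bl(W(S))$. The vertex map is algebraic on the dense open set of rank-exactly-$3$ points (its value can be read off from the adjugate of $A(x)$, which has rank $1$ there), so if it is nonconstant on the irreducible space $H$, the closure of its image is a curve contained in $\Bl(W(S))$, and \cref{lem:base-curve} contradicts the irreducibility of $S$. Note also that your three-way case split is really a dichotomy: given generic corank $1$ and no common kernel vector, a nonconstant kernel is automatic, since a kernel vector constant on a dense subset of $H$ would be a common kernel vector by closedness of the condition $A(x)\v = \0$. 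I recommend deleting the middle of your argument and promoting the last sentence to the proof.
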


\begin{proof}
    By \cref{lem:basepoint-singularity},
    the singularities of the quadrics in $W(H)$ are \basepoints for $W(S)$.
    If the quadrics in $W(H)$ have different singularities,
    then $\Bl(W(S))$ contains a curve.
    This is impossible by \cref{lem:base-curve}.
\end{proof}

\noindent
We also need some facts about special linear systems of quadrics,
beginning with pencils:

\begin{lemma}
    \label{lem:rank-2-pencil}
    Let $P$ be a pencil of quadrics in $\P^n$ and assume that the generic member is a \quadric{2}.
    Then one of the following is true:
    \begin{enumerate}
        \item
        There are no \quadric{1}s in $P$\Pkern,
        and $\Bl(P)$ consists of a hyperplane~$H$ and a linear subspace~$L \not\subset H$ of codimension~$2$;
        
        \item
        There is one \quadric{1} in $P$\Pkern,
        and $\Bl(P)$ consists of a hyperplane~$H$ and a double linear subspace~$L \subset H$ of codimension~$2$;
        
        \item
        There are two \quadric{1}s in $P$\Pkern,
        and $\Bl(P)$ contains a double linear subspace~$L$ of codimension~$2$.
    \end{enumerate}
\end{lemma}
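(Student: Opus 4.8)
The plan is to show that a pencil $P$ whose generic member is a \quadric{2} is projectively equivalent to exactly one of three explicit normal forms, after which each of (i)--(iii) follows from a short monomial computation of $\Bl(P)$. Since having rank at most $2$ is a closed condition and the generic member of $P$ has rank $2$, every member of $P$ has rank $1$ or $2$ (rank $0$ is excluded since $P$ is an honest pencil); hence each member is a union $Q_t = H_t \cup H'_t$ of two hyperplanes, which coincide exactly when $Q_t$ is a \quadric{1}. Writing $P = \langle q_0, q_1\rangle$ and $q_t = q_0 + t q_1$, the rank-$2$ form $q_t$ factors over the function field $K = \C(t)$ as $q_t = \ell m$ with $\ell, m$ linear forms in $\x$ whose coefficients lie either in $K$ or in a quadratic extension $K' \supset K$ whose Galois involution swaps $\ell$ and $m$; equivalently, the assignment $t \mapsto \{H_t, H'_t\} \in \mathrm{Sym}^2\check{\P}^n$ (with $\check{\P}^n$ the space of hyperplanes in $\P^n$) lifts to a double cover $C \to \P^1$, and the two cases are whether this cover splits or not.

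In the split case, clear denominators so that $\ell, m$ have coefficients in $\C[\lambda,\mu]$, bihomogeneous of bidegrees $(d_1,1)$ and $(d_2,1)$ with no common factor in $\C[\lambda,\mu]$. By Gauss's lemma $\ell m$ has content $1$; the form $\lambda q_0 + \mu q_1$ also has content $1$ (otherwise some member would vanish identically, contradicting rank $\geqslant 1$), so the two agree up to a nonzero scalar, and comparing bidegrees gives $d_1 + d_2 = 1$. Thus one component is a fixed hyperplane $H = V(\ell)$ and the other moves linearly, $m = \lambda m_0 + \mu m_1$ with $m_0, m_1$ independent; after absorbing scalars, $q_0 = \ell m_0$ and $q_1 = \ell m_1$. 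If $\ell \notin \langle m_0, m_1\rangle$, then $\ell$ is never proportional to $\lambda m_0 + \mu m_1$, so $P$ contains no \quadric{1}, and $\Bl(P) = V(\ell m_0, \ell m_1) = V(\ell)\cup V(m_0,m_1)$ is the reduced union of a hyperplane $H$ and a codimension-$2$ linear subspace $L\not\subset H$: this is (i). If $\ell \in \langle m_0, m_1\rangle$, a change of basis makes $\ell = m_0$, so $q_0 = m_0^2$ and $q_1 = m_0 m_1$; then $P$ has the single \quadric{1} $V(m_0^2)$, and $\Bl(P) = V(m_0^2, m_0 m_1)$ is supported on $H = V(m_0)$ with an embedded double structure along $L = V(m_0,m_1)\subset H$: this is (ii).

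In the non-split case $C$ is irreducible (a reducible degree-$2$ cover of $\P^1$ has two components each mapping isomorphically to $\P^1$ after normalisation, which lands us in the split case); let $\rho\colon \widetilde C \to \P^1$ be its normalisation, with sheet-exchanging involution $\sigma$, and let $\psi\colon \widetilde C \to \check{\P}^n$ of degree $e\geqslant 1$ be the resulting morphism, so that the two components of $Q_{\rho(u)}$ are the hyperplanes $\psi(u)$ and $\psi(\sigma u)$. The composite $u\mapsto [q_{\rho(u)}] \in \P^N$ (with $\P^N$ the space of quadrics in $\P^n$) factors as $\rho$ followed by the linear parametrisation of the line $P\subset\P^N$, hence has degree $2$; on the other hand it is obtained from $\psi(u)$ and $\psi(\sigma u)$ by multiplying the two linear forms, and a base-point check shows this contributes degree $2e$. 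Hence $e = 1$, which forces $\widetilde C\cong\P^1$ and $\psi$ to be a linear embedding, so $\ell_u = u_0\alpha + u_1\beta$ for independent linear forms $\alpha,\beta$ in $\x$. Then $\sigma$ is a nontrivial linear involution of $\P^1$ with two fixed points; choosing $u$ so that $\sigma(u) = [u_0:-u_1]$, the quotient map is $\rho(u) = [u_0^2:u_1^2]$ up to $\mathrm{PGL}_2$, and
\[
  \rho^*(\lambda q_0 + \mu q_1) = \ell_u\,\ell_{\sigma u} = (u_0\alpha + u_1\beta)(u_0\alpha - u_1\beta) = u_0^2\alpha^2 - u_1^2\beta^2,
\]
so $q_0 = \alpha^2$ and $q_1 = -\beta^2$ up to scalars: $P$ is the pencil $\langle\alpha^2,\beta^2\rangle$. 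It contains exactly the two \quadric{1}s $V(\alpha^2)$ and $V(\beta^2)$, and $\Bl(P) = V(\alpha^2,\beta^2)$ contains the double linear subspace supported on $L = V(\alpha,\beta)$ of codimension $2$: this is (iii). Since the split and non-split cases exhaust all possibilities and the number of \quadric{1}s is $0$, $1$, or $2$ precisely as in (i), (ii), (iii), this proves the lemma.

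The main obstacle is the non-split case: one must rule out every exotic degeneration of the double cover and pin it down to $\langle\alpha^2,\beta^2\rangle$. The crux is the degree count forcing $e = 1$; here it is essential not to discard a common factor silently — this is what Gauss's lemma on contents is doing, both here and in the split case — and to verify that the multiplication map $\widetilde C \to \P^N$ is base-point-free, which is what makes its degree exactly $2e$ rather than smaller. By contrast, the split case is routine once the fixed-versus-moving dichotomy is in hand, and all three \baselocus computations are immediate in coordinates.
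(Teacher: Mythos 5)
Your proof is correct, but it is a genuinely different and far more self-contained argument than the one in the paper. The paper disposes of cases (i) and (ii) by citing the proof of \cite[Lemma~2.5]{Hel17}, and of case (iii) by the one-line observation that the two \quadric{1}s $\V(\alpha^2)$ and $\V(\beta^2)$ generate the pencil, so that $\Bl(P) \supseteq \V(\alpha^2,\beta^2) \supseteq \V\p{(\alpha,\beta)^2}$. You instead classify all pencils of generic rank~$2$ into the three normal forms $\langle \ell m_0, \ell m_1\rangle$ with $\ell\notin\langle m_0,m_1\rangle$, $\langle m_0^2, m_0m_1\rangle$, and $\langle\alpha^2,\beta^2\rangle$, via the split/non-split dichotomy for the factorisation of the generic member over $\C(t)$, with the Gauss-lemma content argument pinning down the fixed-plus-moving structure in the split case and the degree count on the double cover of hyperplane components forcing the non-split case into $\langle\alpha^2,\beta^2\rangle$. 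What your approach buys is a complete, reference-free classification that in particular shows the three cases are exhaustive and mutually exclusive (e.g.\ that a pencil with two \quadric{1}s must be exactly $\langle\alpha^2,\beta^2\rangle$, and that at most two \quadric{1}s can occur); what it costs is considerably more machinery than the statement needs, and a couple of steps stated slightly too tersely \dash in particular, the inference that $e=1$ forces $\widetilde{C}\cong\P^1$ rests on $\psi$ being non-constant together with the fact that a curve carrying a base-point-free degree-$1$ linear system of dimension at least $1$ is rational, which deserves a sentence. These are expository gaps, not mathematical ones; the argument is sound.
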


\begin{proof}
    The first two statements can be read off of the proof of \cite[Lemma~2.5]{Hel17}.
    The last statement follows by considering the two \quadric{1}s as generators for $P$\Pkern.
\end{proof}

\noindent
Using \cref{lem:rank-2-pencil} as the base case for induction,
we generalise the statement to linear systems of any dimension:

\begin{lemma}
    \label{lem:hyperplane-in-base}
    Let $W$ be a linear system of quadrics in $\P^n$ and assume that the generic member is a \quadric{2}.
    Then one of the following is true:
    \begin{enumerate}
        \item
        There are no \quadric{1}s in $W$\Wkern,
        and $\Bl(W)$ contains a hyperplane;
        
        \item
        There is one \quadric{1} in $W$\Wkern,
        and $\Bl(W)$ contains a hyperplane;
        
        \item
        There is a quadratic hypersurface of \point{1}s in $W$\Wkern,
        and $\Bl(W)$ contains a double linear subspace~$L$ of codimension~$2$.
    \end{enumerate}
\end{lemma}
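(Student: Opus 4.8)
The plan turns on the strong local structure the hypothesis imposes. Since the vanishing of the $3 \times 3$ minors of the Gram matrix is a closed condition satisfied on a dense subset of the irreducible variety~$W$, \emph{every} member of $W$ has rank at most $2$, while some member $q_1$ has rank exactly $2$. Hence, for any $q_2 \in W$ not proportional to $q_1$, the pencil $\langle q_1, q_2 \rangle$ consists of quadrics of rank $\leqslant 2$ whose generic member is a \quadric{2}, so \cref{lem:rank-2-pencil} applies to it. First I would write $q_1 = ab$ and $q_2 = a_2 b_2$, with $a, b$ and $a_2, b_2$ each a pair of independent linear forms, and note that were $a, b, a_2, b_2$ linearly independent, then $q_1 + t q_2$ would have rank $4$ for $t \neq 0$, contradicting that every member has rank $\leqslant 2$; hence $\dim \langle a, b, a_2, b_2 \rangle \leqslant 3$.

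Next I would take $q_2$ general in $W$. If $\dim \langle a, b, a_2, b_2 \rangle = 2$, then $a_2, b_2 \in \langle a, b \rangle$, so $q_2 \in \langle a^2, ab, b^2 \rangle$. If the dimension is $3$, then $q_1$ and $q_2$ involve only the linear forms in $\langle a, b, a_2, b_2 \rangle$ and so are pullbacks of conics on a $\P^2$, making $\langle q_1, q_2 \rangle$ a pencil of conics whose generic member is a \quadric{2}; by \cref{lem:rank-2-pencil} its base locus contains either a line of that $\P^2$ \dash whence $a$ or $b$ divides $q_2$ \dash or a double point~$L$ at which every member, in particular $q_1 = ab$, is singular, in which case $a, b, a_2, b_2$ all vanish at~$L$ and span at most the $2$-dimensional space of linear forms vanishing there, contradicting $\dim = 3$. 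Thus a general $q_2 \in W$ satisfies $a \mid q_2$, or $b \mid q_2$, or $q_2 \in \langle a^2, ab, b^2 \rangle$. Each of these three conditions cuts out a linear subspace of $W$, so their union is closed and contains a dense subset of $W$; by irreducibility, $W$ lies in one of them.

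Finally I would read off the conclusion. If $W$ has a common linear factor $\ell$, write $W = \ell V$ for a linear space $V$ of linear forms: then $\Bl(W)$ contains the hyperplane $\V(\ell)$, and $\ell v$ is a \quadric{1} only when $v \in \langle \ell \rangle$, so $W$ contains the single \quadric{1} $\ell^2$ when $\ell \in V$ and no \quadric{1} otherwise \dash alternative~(1) or~(2). If instead $W \subseteq N := \langle a^2, ab, b^2 \rangle$, then every member of $W$, being a binary form in $a$ and $b$, is singular along the codimension-$2$ linear space $L := \V(a, b)$, so $\Bl(W)$ contains the double of $L$; moreover a member $\alpha a^2 + \beta ab + \gamma b^2$ has rank $\leqslant 1$ exactly when $\beta^2 - 4\alpha\gamma = 0$, which does not hold identically on $W$ since the generic member has rank $2$, so the \point{1}s of $W$ form a quadratic hypersurface \dash alternative~(3) (and incidentally $\dim W \leqslant 2$). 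I expect the only genuine difficulty to be the case $\dim \langle a, b, a_2, b_2 \rangle = 3$: deciding which conclusion of \cref{lem:rank-2-pencil} the auxiliary pencil of conics realises, and ruling out the double-point outcome by the dimension count above. The remaining ingredients \dash the rank-$4$ computation and the irreducibility of the linear system~$W$ \dash are routine.
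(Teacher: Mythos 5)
Your proof is correct, but it takes a genuinely different route from the paper's. The paper argues by induction on $\dim(W)$ with \cref{lem:rank-2-pencil} as the base case, splitting on the number of \quadric{1}s: a pigeonhole argument on three codimension-$1$ subsystems when there are none, a direct base-locus argument when there is exactly one, and a restriction of $W$ to the common hyperplane~$H$ (producing a system of double hyperplanes in $\P^{n-1}$) when there are several. You instead fix one \quadric{2} $q_1 = ab$, use the rank-$4$ computation to force $\dim\langle a,b,a_2,b_2\rangle \leqslant 3$ for a general second member $q_2 = a_2b_2$, handle the $3$-dimensional case by reducing to a pencil of conics (so \cref{lem:rank-2-pencil} is only needed for $n = 2$, and the double-point alternative is correctly excluded because it would force $\langle a_2,b_2\rangle = \langle a,b\rangle$), and then conclude by irreducibility that $W$ lies in one of three \emph{linear} loci: quadrics divisible by $a$, by $b$, or lying in $\langle a^2, ab, b^2\rangle$. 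This is non-inductive and yields strictly more than the lemma asserts: in alternatives~(1) and~(2) the system is $\ell V$ for a fixed linear form~$\ell$, and in alternative~(3) one gets $W \subseteq \operatorname{Sym}^2\langle a,b\rangle$, hence $\dim(W) \leqslant 2$ and an explicit discriminant conic of \point{1}s. The one step you should spell out is that alternative~(3) of \cref{lem:rank-2-pencil} really forces every member of the pencil to be singular at~$L$ (this follows because that pencil is generated by the two squares $\ell_1^2, \ell_2^2$, so every member lies in $I_L^2$); with that made explicit, the argument is complete.
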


\begin{proof}
    We prove this by induction on $\dim(W)$.
    The base case $\dim(W) = 1$ is true by \cref{lem:rank-2-pencil}.
    Let $k \coloneqq \dim(W)$
    and assume that the assertions hold for linear systems of dimension less than or equal to $k - 1$.
    \begin{enumerate}
        \item
        If there are no \quadric{1}s in $W$\Wkern,
        then $\Bl(W')$ contains a hyperplane
        for all proper, nonempty linear subsets $W' \subset W$\Wkern.
        Let $W_1$, $W_2$, $W_3 \subset W$ be three linear subsets of codimension~$1$ containing the quadric $H_1 \cup H_2$.
        Then one of the hyperplanes~$H_1$,~$H_2$ is contained in at least two of the loci $\Bl(W_1)$, $\Bl(W_2)$ and $\Bl(W_3)$.
        That hyperplane is contained in $\Bl(W)$.
        
        \item
        If $2H$ is the only \quadric{1} in $W$\Wkern,
        then $H \subset \Bl(W')$
        for all proper, linear subsets $W' \subset W$ containing $2H$\Hkern.
        Thus $H \subset \Bl(W)$.
        
        \item
        Assume that there is more than one \quadric{1} in $W$
        and let $2H$ be one of them.
        Since there is no hyperplane contained in $\Bl(W)$,
        by the previous case,
        there is at most one hyperplane $W' \subset W$ containing $2H$ and no other \quadric{1}s.
        Hence $W$ contains a hypersurface~$S$ of \quadric{1}s.
        A generic hyperplane~$W' \subset W$ intersects $S$ in a quadric,
        so $S$ has degree~$2$.
        
        Restricting $W$ to $H$ defines a linear system~$W''$ of quadrics in $\P^{n - 1}$\Pnkern.
        Each line in $W$ through $[2H]$ is collapsed to a point in $W''$\Wkern.
        For each point in $W''$\Wkern, we can find a representative in $S$\Skern.
        It follows that $W''$ is a linear system of double hyperplanes in $\P^{n - 1}$\Pnkern.
        Thus $W''$ consists of a single double hyperplane~$L \subset \P^{n - 1}$\Pnkern,
        which is contained in $\Bl(W)$. 
        \qedhere
    \end{enumerate}
\end{proof}

\begin{lemma}
    \label{lem:codim-2-singular-subspace}
    Let $W$ be a linear system of quadrics in $\P^3$ with \basepoint~$p$.
    Let $V \subset W$ be the subspace of quadrics singular at a point $p$.
    Suppose that $\dim(V) = \dim(W) - 2$.
    Then $\Bl(W)$ contains a scheme of length~$2$ with support in $p$.
\end{lemma}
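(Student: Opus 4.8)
The plan is to choose coordinates adapted to the \basepoint and reduce the claim to a one-variable computation along a distinguished line through $p$. After a linear change of coordinates I may assume $p = [1 : 0 : 0 : 0]$. Since $p$ is a \basepoint of $W$, every quadric $Q \in W$ has a defining equation of the form
\[
    q = x_0\ell_Q + g_Q,
\]
where $\ell_Q$ is a linear form and $g_Q$ a quadratic form, both in the variables $x_1, x_2, x_3$ only. A short computation with the partial derivatives of $q$ shows that $Q$ is singular at $p$ if and only if $\ell_Q = 0$. Passing to the underlying vector spaces of forms, the assignment $\pi \colon Q \mapsto \ell_Q$ is linear with kernel $V$, so the hypothesis $\dim(V) = \dim(W) - 2$ says exactly that $\pi$ has rank~$2$; that is, the linear forms $\ell_Q$, for $Q \in W$, span a $2$-dimensional subspace of $\langle x_1, x_2, x_3 \rangle$.

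Next I would exploit that a $2$-dimensional space of linear forms in the three variables $x_1, x_2, x_3$ has a common zero $[b_1 : b_2 : b_3] \in \P^2$, so $\ell_Q(b_1, b_2, b_3) = 0$ for every $Q \in W$. Let $L \subset \P^3$ be the line through $p$ and $[0 : b_1 : b_2 : b_3]$, parametrised by $[s : tb_1 : tb_2 : tb_3]$. Restricting the equation of an arbitrary $Q \in W$ to $L$ yields
\[
    q|_L = s t\,\ell_Q(b_1, b_2, b_3) + t^2 g_Q(b_1, b_2, b_3) = t^2 g_Q(b_1, b_2, b_3),
\]
which lies in the ideal $(t^2)$ in the affine chart $\{s \neq 0\}$ of $L$, centred at $p$. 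Hence every member of $W$ vanishes on the length-$2$ subscheme $Z \coloneqq \V(t^2) \subset L$, which is supported at $p$. Since $\Bl(W)$ is cut out by the forms of $W$, their ideal is contained in the ideal of $Z$, so $Z \subseteq \Bl(W)$, as required.

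I do not anticipate a serious obstacle. The one step with genuine content is reading the codimension-$2$ hypothesis as the statement that the forms $\ell_Q$ fail to span all linear forms in $x_1, x_2, x_3$, hence have a common zero: this common zero is the tangent direction at $p$ along which every quadric of $W$ is tangent to $L$ (or contains $L$), and that is what produces the length-$2$ scheme. The points deserving a little care are the bookkeeping between projective and vector-space dimensions in the application of rank--nullity, and the final sentence, where ``$q|_Z = 0$ for all $Q \in W$'' should be spelled out as the ideal containment that gives the scheme-theoretic inclusion $Z \subseteq \Bl(W)$.
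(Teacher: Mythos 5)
Your proof is correct. It reaches the same geometric fact as the paper --- every quadric in $W$ is tangent at $p$ to a fixed line $L$ through $p$ --- but by a different, more computational route. The paper argues synthetically: it picks $Q_1, Q_2$ generating $W$ together with $V\Vkern$, forms two pencils $P_i = \langle Q_i, Q_i'\rangle$ with $Q_i' \in V$ singular at $p$, observes that the base curve of $P_i$ is singular at $p$ with tangent plane $T_i = T_pQ_i$, and concludes that $\Bl(W)$ contains the first-order neighbourhood of $p$ in the line $T_1 \cap T_2$. Your $L$ is exactly that line: the rank-$2$ statement for $\pi \colon Q \mapsto \ell_Q$ says the tangent planes $\V(\ell_Q)$ at $p$ form only a pencil of planes, whose axis is $L$. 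What your version buys is that it is entirely self-contained linear algebra plus a restriction to $L$; it does not invoke the base locus of a pencil being a quartic curve (which tacitly requires the pencil to have no fixed component) nor the identification of the tangent plane of that curve at its singular point, and it treats uniformly the degenerate case where $L$ lies in $\Bl(W)$. The two points you flag --- the shift between projective and vector-space dimensions in rank--nullity, and phrasing the conclusion as the ideal containment $I_{\Bl(W)} \subseteq I_Z$ --- are exactly the right ones to spell out, and neither causes any difficulty.
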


\begin{proof}
    Let $Q_1$, $Q_2$ be two quadrics that generate $W$ together with $V$\Vkern.
    Let $P_i$ be a pencil generated by $Q_i$ and a quadric~$Q'_i$ in $V$\Vkern.
    Since $p$ is a \basepoint for $P_i$ and $Q'_i$ is singular at $p$,
    the \baselocus~$\Bl(P_i)$ is a quartic curve~$C_i$ with a singularity at $p$.
    Let $T_i$ be the tangent plane of $C_i$ at $p$.
    It follows that $T_i$ is a common tangent plane
    to all the quadrics in $P_i$.
    Hence all quadrics in $P_i$ contain the first order infinitesimal neighbourhood of $p$ in $T_i$.
    Since all quadrics in $V$ are singular at $p$,
    we get that $\Bl(W)$ contains the first order infinitesimal neighbourhood
    of $p$ in the line $T_1 \cap T_2$.
\end{proof}

\begin{remark}
    \label{rmk:four-basepoints}
    Repeatedly in this paper,
    we encounter a symmetroid~$S$ where the \baselocus~$\Bl(W(S))$ contains four points. 
    Therefore,
    we make a general remark here about the space~$X_{\Bl}$
    of all quadrics containing four \basepoints
    $\Bl \coloneqq \{p_1, p_2, p_3, p_4\}$.
    We describe the \locus{2} of $X_{\Bl}$ in the two cases
    where $\Bl$ consists of four general or four coplanar points.
    
    In both cases,
    there are three quadratic surfaces,
    $X_{12}$, $X_{13}$, $X_{14}$ in the \locus{2} of $X_{\Bl}$.
    Indeed, let $H_{ij}$ and $H_{kl}$ be two planes
    containing the lines $\overline{p_i, p_j}$ and $\overline{p_k, p_l}$, respectively.
    The set of pairs $H_{ij} \cup H_{kl}$
    forms a smooth quadratic surface $X_{ij} \subset X_{\Bl}$ of \point{2}s.
    
    If $\Bl$ spans $\P^3$\Pnkern,
    let $H_i$ be any plane passing through $p_i$.
    The set $X_i$ of all unions~$H_i \cup \overline{p_j, p_k, p_l}$ forms a plane in $X_{\Bl}$.
    On the other hand,
    if the \basepoints are coplanar,
    the \locus{2} contains a linear $3$-space~$X$
    instead of the planes $X_1$, $X_2$, $X_3$, $X_4$.
    The quadrics in $X$ are the unions of any plane in $\P^3$
    with  the plane~$\overline{p_1, p_2, p_3, p_4}$.
\end{remark}

\chapter{Maximality of irreducible quadrics}
\label[section]{sec:max-irred}

In this section,
we prove that
the dimension of an irreducible quadric
that can be a component in the singular locus of a quartic symmetroid
is bounded by $2$.
Hence a quartic symmetroid with a double irreducible quadric of codimension~$1$
is at most $3$-dimensional.

We can give the first characterisation of singular quadrics of codimension~$1$
by modifying the proof of \cite[Proposition~4.1]{Hel17} slightly to be valid in any dimension:

\begin{lemma}
    \label{lem:quadric-in-rank-2}
    Let $S \subset \P^n$ be an irreducible, quartic symmetroid that is double along an irreducible, $(n - 2)$-dimensional quadric~$Q$.
    Then $Q$ is contained in the \locus{2} of $S$\Skern.
\end{lemma}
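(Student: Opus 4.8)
The plan is to prove that the generic point of $Q$ lies in the \locus{2} of $S$; since the \locus{2} is closed and $Q$ is irreducible, it then follows that all of $Q$ does. Assume, for contradiction, that the generic point $x \in Q$ has rank at least~$3$. It cannot have rank~$4$, since then $A(x)$ would be invertible and $x \notin S$, contradicting $Q \subseteq \Sing(S)$; so the generic $x \in Q$ has rank exactly~$3$, meaning $Q(x)$ is a \quadric{3} with a unique vertex $v(x)$, which varies algebraically with $x$ over the dense open subset of $Q$ consisting of its co\point{1}s. Since $[Q(x)] \in \Sing(S)$ is a co\point{1} with singular point $v(x)$, \cref{lem:basepoint-singularity} gives $v(x) \in \Bl(W(S))$.

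Next I would show that this vertex does not depend on $x$. The map $x \mapsto v(x)$ is a morphism from a dense open subset of the irreducible variety $Q$ to $\P^3$, with image contained in $\Bl(W(S))$. As $S$ is irreducible, \cref{lem:base-curve} forbids $\Bl(W(S))$ from containing a curve, so this image is finite, hence a single point $p$. (This plays, for a quadric, the role that \cref{lem:common-singularity} plays for linear spaces of \point{3}s.) Therefore every member of $W(Q)$ is singular at $p$, that is, $W(Q)$ is contained in the linear subspace $V \subseteq W(S)$ consisting of the quadrics singular at $p$.

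The decisive step is a dimension count. Being an irreducible quadric of dimension $n - 2$, $Q$ spans a hyperplane $\Lambda \subset \P^n$, so under the identification $W(S) = \P^n$ --- valid because $S$ is not a cone --- the set $W(Q)$ spans the hyperplane $W(\Lambda) \subset W(S)$. Since $W(Q) \subseteq V$ and $V$ is linear, this forces $W(\Lambda) \subseteq V$; that is, every quadric parametrised by $\Lambda$ is singular at $p$. But $S \cap \Lambda$ is a proper hypersurface of $\Lambda$ --- it cannot be all of $\Lambda$, as $S$ is irreducible of the same dimension but of degree~$4$ --- so the generic point $x \in \Lambda$ lies off $S$, making $A(x)$ invertible and $Q(x)$ smooth. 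This contradiction completes the proof.

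The one genuinely delicate point is the second step: excluding the possibility that the vertices $v(x)$ sweep out a positive-dimensional locus, which is exactly where the irreducibility of $S$ enters, through \cref{lem:base-curve}. The remainder is bookkeeping, resting on two elementary facts: an irreducible $(n-2)$-dimensional quadric spans precisely a hyperplane, and a subvariety spanning a hyperplane cannot be contained in a proper linear subspace of it.
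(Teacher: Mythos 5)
Your proof is correct and follows essentially the same route as the paper's: the same dichotomy on whether the vertices of the generic \point{3}s along $Q$ coincide, with \cref{lem:basepoint-singularity,lem:base-curve} ruling out a moving vertex and the irreducibility of $S$ ruling out a fixed one. The only cosmetic difference is that in the fixed-vertex case you pass directly to the hyperplane spanned by $Q$, whereas the paper works with the codimension-$2$ span of $n-1$ chosen \point{3}s, but the final contradiction \dash the span of $Q$ being contained in $S$ \dash is the same.
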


\begin{proof}
    Assume for contradiction that $Q$ is not contained in the \locus{2} of~$S$\Skern.
    A generic point in $Q$ is then a \node{3}.
    Let $[Q_0], \ldots, [Q_{n - 2}] \in Q$ be \point{3}s that span a linear subspace $H \subset \P^n$ of codimension~$2$.
    Suppose that $Q_0, \ldots, Q_{n - 2}$ share a common singular point~$p$.
    Then all the quadrics in~$H$ are singular at $p$.
    By \cref{lem:basepoint-singularity},
    this implies that $H$ is contained in $\Sing(S)$.
    Since $Q$ is irreducible,
    $H$ is not a component of $Q$.
    The space spanned by $Q$ intersects $S$ in $Q$ and $H$\Hkern,
    and it follows that it is contained in~$S$\Skern.
    This is impossible since $S$ is irreducible.
    We conclude that $Q_0, \ldots, Q_{n - 2}$ do not share a common singularity.
    Then there exists a curve of singularities of \node{3}s in $Q$.
    \cref{lem:basepoint-singularity,lem:base-curve} imply that $S$ is reducible,
    which again is impossible.
    Hence $Q$ is contained in the \locus{2} of $S$\Skern.
\end{proof}

\noindent
\cref{lem:quadric-in-rank-2} leads us to find the maximal dimension of an irreducible quadratic variety consisting of \point{2}s.
We know \emph{a priori} that the \quadric{2}s form a $6$-dimensional variety in the $\P^9$ of all quadrics in $\P^3$\Pnkern,
but requiring that the quadratic variety should be a component in the \locus{2} lowers the possible dimension:

\begin{proposition}
    \label{prop:max-sing-quadric}
    Let $S \subset \P^n$ be a quartic symmetroid and assume that $Q$ is an $(n - 2)$-dimensional, singular quadratic component in the \locus{2} of $S$\Skern.
    Assume that $Q$ contains at most finitely many \point{1}s.
    Then $n \leqslant 3$.
\end{proposition}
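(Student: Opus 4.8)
The plan is to translate the geometric hypothesis into a statement about the linear system $W = W(Q) \subset \P^9$ of quadrics in $\P^3$ associated to $Q$, and then to bound $\dim(W) = n$ from above. Since $Q$ lies in the \locus{2} of $S$, every quadric $Q(x)$ with $[x] \in Q$ has rank at most $2$; since $Q$ contains only finitely many \point{1}s, the generic member of $W$ is a \quadric{2}. Thus $W$ is a linear system of quadrics in $\P^3$ whose generic member has rank exactly $2$, and \cref{lem:hyperplane-in-base} applies. The three cases there give: either $\Bl(W)$ contains a plane in $\P^3$ (cases (i) and (ii)), or $W$ contains a \emph{quadratic hypersurface} of \point{1}s (case (iii)). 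The latter is immediately excluded, because a quadratic hypersurface of \point{1}s in $W$ would mean $Q$ contains infinitely many \point{1}s, contrary to hypothesis, unless $\dim(W) \le 1$, in which case $n \le 1 \le 3$ and we are done. So we may assume $\Bl(W)$ contains a plane $\Pi \subset \P^3$.

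**Reducing to quadrics through a plane.**
Once $\Pi \subseteq \Bl(W)$, every quadric in $W$ is of the form $\Pi \cup \Pi'$ for a varying plane $\Pi'$; equivalently, fixing the linear form $\ell$ cutting out $\Pi$, each $q(x)$ factors as $\ell \cdot m_x$ for a linear form $m_x$. The assignment $x \mapsto m_x$ is linear (up to scalar), so $W$ is the image of a linear map from $\C^{n+1}$ into the $4$-dimensional space of linear forms on $\P^3$; hence $\dim(W) = n \le 3$ outright — \emph{unless} this map fails to be injective, i.e. unless the plane $\Pi$ is not the same for all members of $W$ (some member could be a double plane $2\Pi''$ with $\Pi'' \ne \Pi$, or $\Bl(W)$ could contain more than one plane). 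I would therefore first argue that $\Bl(W)$ contains a \emph{unique} plane, or more precisely that there is a single plane $\Pi$ common to all but finitely many members, after which the factorisation argument gives $n \le 3$ directly.

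**Handling the exceptional members.**
The main obstacle is the bookkeeping around members of $W$ that do not visibly contain the fixed plane $\Pi$: a \quadric{2} of the form $\Pi_1 \cup \Pi_2$ with $\Pi_1, \Pi_2 \ne \Pi$, or a \quadric{1} $2\Pi_3$. Here I would invoke \cref{lem:base-curve}: if two members of $W$ had \emph{no} common plane, the \baselocus of the pencil they span would be a conic or a pair of lines — in any case a curve — forcing $S$ reducible only if that curve sits in $\Bl(W(S))$; but $\Bl(W(S)) \supseteq \Bl(W)$ already contains the plane $\Pi$, and a plane plus an extra curve is still a curve in $\Bl(W(S))$, so \cref{lem:base-curve} would make $S$ reducible. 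Since $S$ need not be assumed irreducible here (the proposition only says ``quartic symmetroid''), I would instead argue directly: if $\Bl(W)$ strictly contains a plane then $\Bl(W)$ contains a curve off that plane, and one checks via \cref{lem:hyperplane-in-base} again that the only consistent configuration is $n$ small. The cleanest route is: show $W \subseteq X_\Pi$, the $\P^3$ of quadrics containing the fixed plane $\Pi$; then $\dim(W) \le 3$ with equality only if $W = X_\Pi$, and conclude $n \le 3$.

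**Summary of the proof outline.**
\begin{enumerate}
    \item Pass to $W = W(Q)$; the hypotheses say $W$ is a linear system of quadrics in $\P^3$ with generic member of rank $2$ and only finitely many \point{1}s.
    \item Apply \cref{lem:hyperplane-in-base}: case (iii) is excluded by the finiteness hypothesis (or forces $n \le 1$), so $\Bl(W)$ contains a plane $\Pi$.
    \item Show every member of $W$ contains $\Pi$, i.e. $W \subseteq X_\Pi \cong \P^3$; the delicate point is ruling out spurious factorisations, handled by a pencil/base-locus argument using \cref{lem:base-curve} or a second application of \cref{lem:hyperplane-in-base}.
    \item Conclude $n = \dim(W) \le \dim(X_\Pi) = 3$.
\end{enumerate}
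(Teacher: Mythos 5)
There is a genuine gap, and it sits at the very first step. You set $W = W(Q)$ and assert that it is ``a linear system of quadrics in $\P^3$ with generic member of rank $2$'', then apply \cref{lem:hyperplane-in-base}. But $Q$ is a \emph{quadric} hypersurface of dimension $n-2$, not a linear space: the map $x \mapsto [Q(x)]$ is a linear embedding of $\P^n$ into $\P^9$\Pnkern, so $W(Q)$ is a quadric hypersurface inside the hyperplane $W(\overline{Q}) \cong \P^{n-1}$ of $W(S)$, not a linear system. \cref{lem:hyperplane-in-base} requires linearity and does not apply to $W(Q)$; and it does not apply to the span of $W(Q)$ either, since the generic quadric associated to a point of $\overline{Q} \setminus Q$ has rank $3$ or $4$. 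The entire difficulty of the proposition is precisely that $Q$ is quadratic rather than linear; the paper's proof gets around this by locating genuine \emph{linear} subspaces inside $Q$ \dash a plane $H \subset Q$ when $Q$ is reducible (to which \cref{lem:hyperplane-in-base} legitimately applies), and the lines through the apex when $Q$ is an irreducible cone (to which \cref{lem:rank-2-pencil} applies) \dash and then derives a contradiction for $n = 4$ using \cref{rmk:four-basepoints} and \bezout in the first case, and a common plane in the \baselocus of a net in the second.

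Even if one grants your unproved claim that all quadrics parametrised by $Q$ contain a common plane $\Pi$, the bookkeeping does not deliver the bound. You identify $\dim(W)$ with $n$, but $\dim W(Q) = \dim Q = n - 2$, so containment $W(Q) \subseteq X_\Pi \cong \P^3$ would only give $n \leqslant 5$. (The salvageable observation here is different: the condition $\Pi \subset Q(x)$ is linear in $x$, so it would propagate from $Q$ to its span $\P^{n-1}$\Pnkern, placing that whole hyperplane in the \locus{2} and contradicting the assumption that $Q$ is a \emph{component}; but you do not make this argument, and the premise that a common $\Pi$ exists is false in general \dash compare \cref{rmk:bl-quadric}, where a surface of \quadric{2}s has no common plane.) Note finally that the real content of the proposition is the exclusion of $n = 4$, which your outline never confronts: the third step (``show every member of $W$ contains $\Pi$'') is exactly where the case analysis of the paper's proof would have to reappear, and the appeal to \cref{lem:base-curve} there is inapplicable for the reason you yourself note ($S$ is not assumed irreducible) and in any case concerns $\Bl(W(S))$, not $\Bl(W(Q))$.
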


\begin{proof}
    It suffices to show that $n \neq 4$.
    Assume for contradiction that $n = 4$.
    We proceed by eliminating case by case:
    \begin{enumerate}
        \item
        Assume that $Q$ is reducible.
        Then $Q$ contains a plane~$H\Hkern$.
        Because $Q$ contains finitely many \point{1}s,
        it follows from \cref{lem:hyperplane-in-base} that $\Bl(W(H))$ is a plane.
        Since $W(S)$ equals the span of $W(H)$ and two quadrics,
        we have that $\Bl(W(S))$ contains four coplanar points.
        In the notation of \cref{rmk:four-basepoints},
        $W(S)$ intersects $X$ in $Q$.
        \bezout implies that the $3$-space $X$ is contained in the \locus{2} of $S$,
        so $Q$ is not a component in the \locus{2}.

        \item
        Assume that $Q$ is an irreducible cone.
        Let $p \coloneqq \Sing(Q)$ be the apex of~$Q$.
        Let $H_1 \cup H_2 \subset \P^3$ be the quadric corresponding to $p$.
        Since $Q$ contains at most finitely many \point{1}s,
        there are at least three lines through $p$ that correspond to pencils of type~$1$ or $2$ in the numbering of \cref{lem:rank-2-pencil}.
        The \baseloci of each of these pencils contain either $H_1$ or $H_2$.
        Hence there are two pencils having a plane in common, say $H_1$.
        Then $H_1 \subseteq \Bl(N)$,
        where $N$ is the net spanned by these two pencils.
        The elements of $N$ are \quadric{2}s,
        so $N$ corresponds to a plane contained in $Q$,
        contradicting the irreducibility.
        \qedhere
    \end{enumerate}    
\end{proof}

\begin{corollary}
    \label{cor:max-smooth}
    Let $S \subset \P^n$ be a quartic symmetroid and assume that $Q$ is an $(n - 2)$-dimensional, smooth quadratic component in the \locus{2} of $S$\Skern.
    Then $n \leqslant 4$.
\end{corollary}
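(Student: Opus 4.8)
The plan is to deduce this quickly from \cref{prop:max-sing-quadric}, whose hypothesis requires that the quadratic component $Q$ contain at most finitely many \point{1}s. So the whole task is to rule out the possibility that a smooth $(n-2)$-dimensional quadric $Q$ in the \locus{2} of $S$ contains infinitely many \point{1}s — equivalently, that the \locus{1} meets $Q$ in a positive-dimensional set — when $n \geqslant 4$. Once that is excluded, \cref{prop:max-sing-quadric} gives $n \leqslant 3$ under the extra smoothness hypothesis; but of course a smooth quadric can exist in any dimension, so I expect the real conclusion to be assembled as follows: either $Q$ has finitely many \point{1}s and then $n \leqslant 3 \leqslant 4$ by the proposition, or $Q$ has infinitely many \point{1}s, and I must show this second alternative forces $n \leqslant 4$ as well (and cannot be pushed higher).

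So assume $n \geqslant 5$ and that $Q$ contains a curve $C$ of \point{1}s. Each \quadric{1} corresponding to a point of $C$ is a double plane $2H$, and by \cref{lem:basepoint-singularity} the plane $H$ lies in $\Bl(W(S))$; more precisely, since $2H$ is singular along all of $H$, every point of $H$ is a \basepoint. First I would argue that as $[2H]$ varies over $C$, the plane $H$ cannot stay fixed: if it did, then all quadrics in the smallest linear space containing $C$ would contain $H$, hence so would every quadric in $W(S)$ (as $W(S)$ is spanned by $Q \supseteq C$ together with finitely many extra quadrics... — careful here, one only gets $H \subseteq \Bl$ of a subsystem), which by \cref{lem:base-curve} would make $S$ reducible. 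So the planes $H$ sweep out at least a surface's worth of base locus, and $\Bl(W(S))$ contains a curve — again contradicting \cref{lem:base-curve}. That already shows $C$ cannot exist, so $Q$ has finitely many \point{1}s and \cref{prop:max-sing-quadric} applies.

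But \cref{prop:max-sing-quadric} only yields $n \leqslant 3$, whereas the corollary claims $n \leqslant 4$; the point is presumably the opposite inclusion — $n = 4$ genuinely occurs (a smooth quadric surface $Q \subset \P^4$ in the \locus{2}), and the content of the corollary is that $n \geqslant 5$ is impossible. The cleanest route: suppose $n \geqslant 5$. Restrict $W(S)$ to the linear span $\langle Q \rangle \cong \P^{n-1}$, a linear system all of whose generic members are \quadric{2}s. Since $Q$ has only finitely many \point{1}s, case (3) of \cref{lem:hyperplane-in-base} is excluded for the restriction of $W(S)$ to $\langle Q \rangle$, so either there are no \quadric{1}s in that restriction or exactly one; in both remaining cases \cref{lem:hyperplane-in-base} puts a whole plane $H_0 \subset \P^3$ into $\Bl$ of that restricted system. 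Then $W(S)$, being spanned by $\langle Q \rangle$ and one further quadric, has $\Bl(W(S))$ containing at least the intersection of $H_0$ with one more quadric — a curve — whence $S$ is reducible by \cref{lem:base-curve}: contradiction. Therefore $n \leqslant 4$.

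The main obstacle I anticipate is the bookkeeping around \emph{which} linear subsystem one is applying \cref{lem:base-curve} or \cref{lem:hyperplane-in-base} to. The lemmas about base loci of subsystems only directly control $\Bl$ of the subsystem, not of all of $W(S)$, so I must be careful to either (i) pass from "a plane lies in $\Bl$ of a codimension-$\leqslant k$ subsystem" to "a curve lies in $\Bl(W(S))$" using the fact that $W(S)$ is obtained from the subsystem by adding only a bounded number of generators (so $\Bl(W(S))$ is that plane cut by that many extra quadrics, still positive-dimensional when $n$ is large enough), or (ii) note that when $Q$ itself already supplies a plane in $\Bl$, any additional quadric in $W(S) \setminus \langle Q\rangle$ cuts it in a curve. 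Getting the dimension count right so that $n \geqslant 5$ (not merely $n \geqslant 4$) is what produces the residual curve is the delicate step; the rest is an assembly of the already-established lemmas.
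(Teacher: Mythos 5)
There is a genuine gap, and in fact the whole strategy cannot work. The decisive hypothesis of \cref{prop:max-sing-quadric} that you overlook is not the finiteness of the \locus{1} but the requirement that the quadratic component be \emph{singular}. It simply does not apply to the smooth quadric $Q$ of the corollary, no matter what you prove about its \point{1}s; this is precisely why the corollary's bound is $4$ rather than $3$, and why a smooth quadric surface in the \locus{2} of a quartic symmetroid in $\P^4$ genuinely exists (\cref{ex:max-smooth-quadric}). Your fallback argument for $n \geqslant 5$ then misapplies \cref{lem:hyperplane-in-base}: that lemma concerns a \emph{linear} system whose \emph{generic} member is a \quadric{2}. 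The restriction of $W(S)$ to $\langle Q\rangle$ is linear but its generic member has rank $4$ (otherwise $\langle Q\rangle \subseteq S$, contradicting irreducibility), while $W(Q)$ consists of \quadric{2}s but is a quadric hypersurface inside $W(\langle Q\rangle)$, not a linear system. Neither object satisfies the lemma's hypotheses. A symptom that something is wrong: your final step (span of $\langle Q\rangle$ and one extra quadric, so $\Bl(W(S))$ contains a plane cut by one quadric, hence a curve) makes no use of $n \geqslant 5$ versus $n = 4$ and would therefore also forbid the $n = 4$ examples. The auxiliary claim that a \point{1} $[2H]$ forces $H \subseteq \Bl(W(S))$ is also false; the base locus is the intersection of \emph{all} quadrics in the system.

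The paper's proof takes a different and essential detour: assuming $n = 5$, it slices $S$ with a hyperplane $L$ that contains the tangent space $T_pQ$ at a general point $p \in Q$ but not $Q$ itself. The section $Q' = Q \cap T_pQ$ is then a \emph{singular} quadric surface (a tangent hyperplane section of a smooth quadric is a cone) sitting as a $2$-dimensional component in the \locus{2} of the quartic symmetroid $S \cap L \subset \P^4$. Finiteness of the \point{1}s on $Q'$ is obtained by excluding a surface $T$ of \point{1}s in $Q$ via its secant variety $\Sec(T)$, which would have to be a component of the irreducible $S$ or $Q$. Now \cref{prop:max-sing-quadric} applies to $Q'$ and yields the contradiction. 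If you want to salvage your write-up, this reduction to a singular quadric one dimension down is the idea you are missing.
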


\begin{proof}
    Assume for contradiction that $n = 5$,
    so $\dim(Q) = 3$.
    Let $p \in Q$ be a point,
    and $L$ a linear subspace that strictly contains the tangent space $T_pQ$, but not $Q$.
    Let $Q'$ and $S'$ be the intersections $Q \cap T_pQ$ and $S \cap L$, respectively.
    Then $Q'$ is a quadratic surface which is singular at $p$.

    Assume for contradiction that $Q$ contains a surface~$T$ of \point{1}s.
    Then the secant variety~$\Sec(T)$ is contained in the \locus{2} of $S$\Skern,
    and is thus a component of either $S$ or $Q$.
    This is impossible,
    since both are irreducible.
    Since $Q$ does not contain a surface of \point{1}s,
    $Q'$ contains at most finitely many \point{1}s for a general choice of $p$.
    Moreover, $Q'$ is contained in the \locus{2} of the quartic symmetroid $S'\mkern-3mu$.
    Thus $Q'$ contradicts \cref{prop:max-sing-quadric}. 
\end{proof}

\noindent
The assumption about finitely many \point{1}s
in \cref{prop:max-sing-quadric} is necessary,
as shown below:

\begin{example}
    \label{ex:double-plane}
    The matrix
    \begin{equation*}
        \begin{bmatrix}
            x_0 & x_1 &    x_2    &     0     \\
            x_1 & x_3 &     0     &    x_4    \\
            x_2 &  0  &     0     & x_2 + x_4 \\
             0  & x_4 & x_2 + x_4 &     0
        \end{bmatrix}    
    \end{equation*}
    defines a quartic symmetroid in $\C\P^4$ which has the double plane~$\V\big(x_2 + x_4, x_4^2\big)$ in its \locus{2} and contains the smooth conic section~$\V\big(x_2, x_4, x_0x_3 - x_1^2\big)$ of \point{1}s.

    A quartic symmetroid cannot contain a $3$-space~$H$ of \point{2}s
    with a quadratic surface~$Q \subset H$ of \point{1}s.
    Indeed,
    if that were the case,
    $\Bl(W(H)) \subset \P^3$ would contain a line by \cref{lem:hyperplane-in-base}.
    This is impossible,
    since a line in $\P^3$ is only contained in a conic of \quadric{1}s.
    This example is thus maximal.
\end{example}

\section{Threefolds}
\label[section]{sec:threefolds}

\noindent
Let $S$ be a quartic symmetroid of maximal dimension that is singular along an irreducible quadric~$Q$ of codimension~$1$.
Then $S$ is a threefold and $Q$ is a smooth surface of \point{2}s by \cref{cor:max-smooth}.
In this section,
we describe the singular locus of $S$\Skern,
first in general and then specialise to spectrahedral symmetroids.
The existence of $S$ is shown in \cref{ex:max-smooth-quadric}.

Note first that the set of quadrics associated to $Q$ has a nice description:

\begin{remark}
    \label{rmk:bl-quadric}
    Let $L_1, L_2 \subset \P^3$ be two lines, and let $H_i$ be a plane containing $L_i$.
    The set of pairs $H_1 \cup H_2$ forms a smooth quadratic surface in the $\P^9$ of all quadrics in $\P^3$\Pnkern.
    Any smooth quadratic surface $Q \subset \P^9$ of \quadric{2}s arises this way.
    Indeed, it is straightforward to see that $Q$ contains at most one \quadric{1}.
    Thus by \cref{lem:rank-2-pencil},
    if $P_1$ and $P_2$ are pencils from each of the two rulings on $Q$,
    then $\Bl(P_i)$ consists of a plane~$H_i$ and a line~$L_i$.
    It follows that $L_1 \cup L_2 \subseteq \Bl(Q)$.
\end{remark}

\noindent
Since $Q$ has codimension~$1$ in $S$\Skern,
\cref{rmk:bl-quadric} imposes significant conditions on $W(S)$,
and thus on $S$:

\begin{lemma}
    \label{lem:four-basepoints}
    Let $S \subset \P^4$ be a general quartic symmetroid singular along a smooth quadratic surface~$Q$.
    Then $\Bl(W(S))$ consists of four general points.
\end{lemma}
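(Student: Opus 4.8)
The plan is to realise $W(S)$ as the linear span of the quadric surface associated to $Q$ together with one further quadric, and then to read off the \baselocus. First I would invoke \cref{lem:quadric-in-rank-2}: since $S$ is singular along $Q$, the surface $Q$ lies in the \locus{2} of $S$. As $S$ is not a cone, $W(S) = \P^4$ is embedded linearly in the $\P^9$ of quadrics in $\P^3$, so the image $W(Q) \subseteq W(S)$ is a smooth quadratic surface all of whose members are \quadric{2}s. By \cref{rmk:bl-quadric} there are lines $L_1, L_2 \subset \P^3$ with
\begin{equation*}
    W(Q) = \set{ H_1 \cup H_2 \mid H_i \text{ is a plane with } L_i \subseteq H_i },
\end{equation*}
and $L_1 \cup L_2 \subseteq \Bl(W(Q))$. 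Equality in fact holds: a point $p \notin L_1 \cup L_2$ lies on only one plane through $L_1$ and only one plane through $L_2$, hence is avoided by some member of $W(Q)$, so $p \notin \Bl(W(Q))$.

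Next I would pass to the linear span $\Lambda \coloneqq \langle W(Q) \rangle$. Since $W(Q)$ is a nondegenerate quadratic surface, $\Lambda$ is a $3$-plane, and $\Lambda \subseteq W(S) = \P^4$. Every quadric in $\Lambda$ is a linear combination of members of $W(Q)$ and so vanishes on $L_1 \cup L_2$; together with the trivial inclusion $\Bl(\Lambda) \subseteq \Bl(W(Q))$ this yields $\Bl(\Lambda) = L_1 \cup L_2$. Choosing any $[Q_0] \in W(S) \setminus \Lambda$, we have $W(S) = \langle \Lambda, [Q_0] \rangle$, whence
\begin{equation*}
    \Bl(W(S)) = \Bl(\Lambda) \cap Q_0 = (L_1 \cap Q_0) \cup (L_2 \cap Q_0).
\end{equation*}

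It remains to exploit the genericity of $S$. If $L_i \subseteq Q_0$ for some $i$, then $\Bl(W(S))$ contains a curve, contradicting \cref{lem:base-curve} since $S$ is irreducible; hence each $L_i \cap Q_0$ is a length-$2$ scheme, and for general $S$ it consists of two distinct points. I would then argue that for general $S$ the lines $L_1$ and $L_2$ are skew \dash this is an open condition on the family of quartic symmetroids singular along a smooth quadratic surface of codimension~$1$, and it is satisfied by the symmetroid of \cref{ex:max-smooth-quadric}. Then the four \basepoints, two on $L_1$ and two on $L_2$, are distinct; the plane through any three of them contains one of the lines but meets the other only in a single point, so that the fourth \basepoint lies off this plane. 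Hence $\Bl(W(S))$ consists of four points spanning $\P^3$, i.e.\ of four general points. The step I expect to cause the most trouble is this last one: verifying that for the general member of the family the lines $L_1, L_2$ are genuinely skew and $Q_0$ meets them transversally, rather than being constrained to a special position; by contrast, once \cref{rmk:bl-quadric} is in hand the identity $\Bl(W(S)) = (L_1 \cup L_2) \cap Q_0$ is essentially formal.
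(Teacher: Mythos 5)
Your proposal is correct and follows essentially the same route as the paper: the paper's (much terser) proof likewise writes $W(S)$ as the span of $W(Q)$ and one further quadric~$K$\Mkern, identifies $\Bl(W(Q))$ with the two lines $L_1 \cup L_2$ via \cref{rmk:bl-quadric}, and observes that $K$ meets each line in two points. Your additional care about the equality $\Bl(W(Q)) = L_1 \cup L_2$, the skewness of the lines, and the transversality of $K$ with them is exactly the content the paper sweeps into the word \enquote{general}.
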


\begin{proof}
    The linear system~$W(S)$ is spanned by $W(Q)$ and a quadric~$K \notin W(Q)$.
    By \cref{rmk:bl-quadric}, $\Bl(W(Q))$ is the union of two lines, $L_1$ and $L_2$.
    The assertion follows since $K$ intersects $L_1$ and $L_2$ in two points each.
\end{proof}

\begin{proposition}
    \label{prop:two-conics}
    Let $S \subset \P^4$ be a general quartic symmetroid singular along a smooth quadratic surface~$Q$.
    Then $S$ is singular along two additional conic sections.
\end{proposition}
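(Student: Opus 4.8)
The plan is to determine $\Sing(S)$ completely from the associated linear system. By \cref{lem:four-basepoints}, $\Bl(W(S))$ is a set $\{p_1, p_2, p_3, p_4\}$ of four general points, so $W(S)$ is contained in the space $X_{\Bl}$ of all quadrics through these points (in the notation of \cref{rmk:four-basepoints}); imposing four general point conditions on the $\P^9$ of quadrics in $\P^3$ gives $X_{\Bl} = \P^5$. Since $S$ is not a cone, $W(S) = \P^4$ is a hyperplane in $X_{\Bl}$. By \cref{lem:basepoint-singularity}, together with the fact that the \locus{2} of a quartic symmetroid is contained in its singular locus, a point $[Q'] \in W(S)$ lies in $\Sing(S)$ exactly when $Q'$ has rank at most~$2$, or $Q'$ has corank~$1$ with its singular point at one of the $p_i$. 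So I would compute $\Sing(S)$ by intersecting $W(S)$ with the \locus{2} of $X_{\Bl}$ and with the loci of corank-1 quadrics singular at each $p_i$, all of which live inside $X_{\Bl}$.

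First I would choose coordinates on $\P^3$ making $p_1, \dots, p_4$ the coordinate points, so that $X_{\Bl}$ is the space of quadrics with vanishing diagonal, with homogeneous coordinates $a_{01}, a_{02}, a_{03}, a_{12}, a_{13}, a_{23}$. By \cref{rmk:bl-quadric}, $W(Q)$ is the set of pairs $H_1 \cup H_2$ of planes through two fixed lines $L_1, L_2$, and the proof of \cref{lem:four-basepoints} shows that two of the $p_i$ lie on $L_1$ and the other two on $L_2$; relabelling so that $p_1, p_2 \in L_1$ and $p_3, p_4 \in L_2$, the surface $Q = W(Q)$ is the component $X_{12}$ of \cref{rmk:four-basepoints} and spans the $\P^3$ cut out by $\{a_{01} = a_{23} = 0\}$. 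Being a hyperplane of $X_{\Bl}$ that contains this $\P^3$, the system $W(S)$ has the form $\{\lambda a_{01} + \mu a_{23} = 0\}$; for general $S$ both $\lambda$ and $\mu$ are nonzero, so after rescaling coordinates I may assume $W(S) = \{a_{01} = a_{23}\}$.

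The final step is the intersection computation. Since no plane contains all four $p_i$, every \quadric{2} through them is a union of two planes split either $2 + 2$ or $3 + 1$ among the $p_i$, which recovers exactly the seven components of the \locus{2} of $X_{\Bl}$ from \cref{rmk:four-basepoints}: the quadratic surfaces $X_{12}, X_{13}, X_{14}$ and the planes $X_1, \dots, X_4$. Intersecting each with $W(S) = \{a_{01} = a_{23}\}$, one finds $W(S) \cap X_{12} = Q$, while $W(S) \cap X_{13}$ and $W(S) \cap X_{14}$ are conic sections $C_{13}$ and $C_{14}$, each cut out in a plane by an equation of the form $t^2 = bc$ and hence smooth, and each $W(S) \cap X_i$ is a line contained in $Q$. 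In the same way, the quadrics in $X_{\Bl}$ singular at a fixed $p_i$ form a plane whose intersection with $W(S)$ is again a line contained in $Q$, so $S$ has no corank-1 singularities away from $Q$. Since $C_{13}$ and $C_{14}$ have points off the $\P^3$ spanned by $Q$, neither is contained in $Q$; hence $\Sing(S) = Q \cup C_{13} \cup C_{14}$, and $S$ is singular along the two additional conic sections $C_{13}$ and $C_{14}$.

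The main obstacle is the bookkeeping in the last step: keeping track of which components of the \locus{2} of $X_{\Bl}$ survive the hyperplane section $W(S)$ as new curves and which collapse into $Q$, and verifying that the two surviving pieces are honest smooth plane conics rather than reducible or degenerate curves. This is precisely where the genericity of $S$ is needed --- concretely, that $\lambda$ and $\mu$ are both nonzero --- since for the special systems $\{a_{01} = 0\}$ or $\{a_{23} = 0\}$ the curves $C_{13}$ and $C_{14}$ would degenerate into pairs of lines.
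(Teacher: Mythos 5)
Your proof is correct and follows essentially the same route as the paper: identify $W(S)$ as a hyperplane in $X_{\Bl} = \P^5$ and intersect it with the components $X_{12}, X_{13}, X_{14}, X_1, \dots, X_4$ of the \locus{2} from \cref{rmk:four-basepoints}, finding that only $X_{13}$ and $X_{14}$ contribute new conics. The only differences are stylistic \dash you carry out the intersection in explicit coordinates where the paper invokes \bezout and the synthetic fact that $X_{ij} \cap X_k$ is a line, and you verify the absence of co\point{1} singularities outside $Q$ explicitly where the paper absorbs this into the genericity assumption.
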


\begin{proof}
    Since $S$ is general,
    the only other singularities are \point{2}s
    outside of $Q$.
    By \cref{lem:four-basepoints},
    $X_{\Bl(W(S))} = X_{\Bl}$ in the notation of \cref{rmk:four-basepoints}.
    We can identify the \point{2}s of $S$
    with the intersection of $W(S)$ with the \locus{2} of $X_{\Bl}$.

    Continuing with the notation of \cref{rmk:four-basepoints},
    consider the intersection $X_{ij} \cap X_k$.
    Since $X_{ij} = X_{kl}$, we may assume that the indices $i$, $j$ and $k$ are distinct.
    The intersection consists of all quadrics $H_{kl} \cup H$\Hkern,
    where $H_{kl}$ is any plane containing $\overline{k, l}$,
    and $H \coloneqq \overline{i, j, l}$.
    Thus $X_{ij} \cap X_k$ is a line.

    The linear system~$W(S)$ is a hyperplane in $X_{\Bl} = \P^5$\Pnkern.
    By \bezout, a generic hyperplane intersects $X_{ij}$ in a conic section and $X_i$ in a line.
    Suppose that the quadratic surface~$Q$ contained in $W(S)$ is $X_{12}$.
    Then $W(S)$ intersects $X_{13}$ and $X_{14}$ in a conic section each.
    Since $X_{12}$ intersects each plane $X_i$ in a line,
    $W(S)$ does not meet $X_i$ outside of $X_{12} = Q$.
\end{proof}

\subsection{Spectrahedra}
\label{sec:threefolds-spectrahedra}

\cref{prop:two-conics} specifies the singularities of $S$
considered as a complex variety.
If $S$ is a real, spectrahedral symmetroid,
then some of those singularities are real
and some of the real singularities lie on the boundary of the spectrahedron.
Here we examine the possible configurations of real singularities.

If $S$ has a nonempty spectrahedron,
then $W(S)$ contains a positive definite quadric.
It follows that $\Bl(W(S))$ consists of complex conjugate pairs, $p_1$, $\overline{p}_1$ and $p_2$, $\overline{p}_2$.
As pointed out in \cite[Lemma~2.4]{HR18},
the real quadrics containing $\overline{p_1, \overline{p}_1}$ and $\overline{p_2, \overline{p}_2}$ are indefinite,
whereas the real quadrics containing $\overline{p_1, p_2}$ and $\overline{\overline{p}_1, \overline{p}_2}$
or $\overline{p_1, \overline{p}_2}$ and $\overline{\overline{p}_1, p_2}$ are semidefinite.
We rename $X_{12}$, $X_{13}$ and $X_{14}$ from \cref{rmk:four-basepoints}
as $X_{i}$, $X_{s1}$ and $X_{s2}$,
indicating whether the real points are indefinite or semidefinite.

\begin{proposition}
    \label{prop:configurations}
    Let $S \subset \P^4$ be a quartic spectrahedral symmetroid singular along a quadratic surface~$Q$.
    Then $S$ is also singular along two conic sections with real points,
    and one of the following is true:
    \begin{enumerate}
        \item
        $Q$ is disjoint from the spectrahedron,
        and both conics lie on the boundary of the spectrahedron.

        \item
        $Q$ and one of the conics lie on the boundary of the spectrahedron,
        while the other conic is disjoint from the spectrahedron.
    \end{enumerate}
\end{proposition}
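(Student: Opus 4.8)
The plan is to read off the complex classification of \cref{prop:two-conics} and then impose the definiteness dictionary recalled above. Since $\Bl(W(S)) = \{p_1, \overline{p}_1, p_2, \overline{p}_2\}$ is, by \cref{lem:four-basepoints}, a set of four points spanning $\P^3$, we are in the setting of \cref{rmk:four-basepoints} with $X_{\Bl(W(S))} = X_{\Bl}$. By \cref{prop:two-conics} and its proof, $S$ is singular along two conic sections $C_1$, $C_2$ besides $Q$, and $\{Q, C_1, C_2\}$ is, in some order, the set of intersections of $W(S)$ with the three quadratic surfaces $X_i$, $X_{s1}$, $X_{s2}$ of the \locus{2} of $X_{\Bl}$. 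Precisely one of these three surfaces lies in $W(S)$; that one is $Q$, and the other two meet $W(S)$ in the conics $C_1$, $C_2$.

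First I would decide which surface $Q$ is. By \cref{rmk:bl-quadric}, $\Bl(W(Q)) = L_1 \cup L_2$ for two lines, and $Q$ is the locus of unions $H_1 \cup H_2$ with $H_j \supseteq L_j$; hence $Q$ equals $X_i$, $X_{s1}$ or $X_{s2}$ according to whether $\{L_1, L_2\}$ is $\{\overline{p_1, \overline{p}_1}, \overline{p_2, \overline{p}_2}\}$, $\{\overline{p_1, p_2}, \overline{\overline{p}_1, \overline{p}_2}\}$ or $\{\overline{p_1, \overline{p}_2}, \overline{\overline{p}_1, p_2}\}$. Since $L_1 \cup L_2$ is real, either both $L_j$ are real—in which case $Q = X_i$—or $L_2 = \overline{L}_1$—in which case $Q$ is $X_{s1}$ or $X_{s2}$, and we rename so that $Q = X_{s1}$. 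When $Q = X_i$, its real points are pairs of real planes, hence indefinite, so $Q$ is disjoint from the spectrahedron, while the real points of $C_1 = W(S) \cap X_{s1}$ and $C_2 = W(S) \cap X_{s2}$ are positive semidefinite \quadric{2}s; when $Q = X_{s1}$, the real points of $Q$ and of $C_2 = W(S) \cap X_{s2}$ are positive semidefinite \quadric{2}s, while those of $C_1 = W(S) \cap X_i$ are indefinite. A real, positive semidefinite \quadric{2}~$R$ with $[R] \in W(S)$ has $[R] \in \partial S(f)$, since the segment joining a positive definite quadric of $W(S)$ to $[R]$ consists of positive definite quadrics away from $[R]$; an indefinite quadric lies outside $S(f)$. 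Therefore, once the two conics are known to have real points, the first alternative of the proposition holds when $Q = X_i$ and the second when $Q = X_{s1}$.

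The crux is thus the reality of $C_1$ and $C_2$, which I would establish by coordinates, treating $Q = X_i$ in detail (the case $Q = X_{s1}$ being parallel, now with $L_2 = \overline{L}_1$). Choose real coordinates with $L_1 = \V(y_0, y_1)$, $L_2 = \V(y_2, y_3)$ and $p_1 = [0 : 0 : 1 : i]$, $p_2 = [1 : i : 0 : 0]$, so that $W(Q) = \langle y_0 y_2,\, y_0 y_3,\, y_1 y_2,\, y_1 y_3\rangle$ and $W(S) = W(Q) + \langle K\rangle$ for a real quadric $K$ through the four base points. Vanishing of $K$ there forces it to restrict to $\kappa_1(y_2^2 + y_3^2)$ on $L_1$ and to $\kappa_2(y_0^2 + y_1^2)$ on $L_2$, with $\kappa_1, \kappa_2 \in \R \setminus \{0\}$ (if one were zero, $\Bl(W(S))$ would contain a line, contradicting \cref{lem:base-curve}). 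A real quadric of $X_{s1}$ is $\abs{\ell}^2$ for a complex linear form $\ell$ vanishing on $\overline{p_1, p_2}$; in the block decomposition of symmetric matrices adapted to $L_1 \cup L_2$, the two diagonal blocks of $\abs{\ell}^2$ are scalar, and $\abs{\ell}^2 \in W(S)$ exactly when the ratio of these two scalars equals $(\kappa_2 : \kappa_1)$. As the scalars are nonnegative and not both zero, this ratio is attainable—then by a whole circle of forms $\ell$—precisely when $\kappa_1$ and $\kappa_2$ have the same sign, and the identical criterion holds for $X_{s2}$. Finally, a positive definite element of $W(S)$ equals $N + tK$ with $N \in W(Q)$, so its diagonal blocks are $t\kappa_2 I_2$ and $t\kappa_1 I_2$, and positivity forces $t\kappa_1 > 0$ and $t\kappa_2 > 0$ by a Schur complement argument; hence $\kappa_1$, $\kappa_2$ share a sign and $C_1$, $C_2$ have real points. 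The main obstacle is exactly this last step: converting the bare existence of a positive definite quadric in $W(S)$ into the sign relation between $\kappa_1$ and $\kappa_2$—equivalently, into the assertion that $W(S)$ is positioned so that its sections of $X_{s1}$ and $X_{s2}$ (and of $X_i$, in the other case) meet their real loci.
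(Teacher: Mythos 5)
Your overall strategy coincides with the paper's: identify $Q$ with one of the three quadratic surfaces $X_i$, $X_{s1}$, $X_{s2}$ in the \locus{2} of $X_{\Bl}$, apply the definiteness dictionary from \cite[Lemma~2.4]{HR18}, and reduce the whole statement to showing that the two residual conics have real points. Your treatment of the case $Q = X_i$ is correct and complete, and is a slightly more conceptual repackaging of the paper's computation: the paper writes $W(S) = \V(x_{00} - \lambda x_{22})$ and checks that the two conics are real and that $W(S)$ contains a positive definite matrix exactly when $\lambda > 0$, which is precisely your condition $\kappa_1\kappa_2 > 0$ read off from the diagonal blocks. (Minor quibble: no Schur complement is needed \dash principal submatrices of a positive definite matrix are positive definite.)

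The gap is the claim that the case $Q = X_{s1}$ is \enquote{parallel}. It is not. There $L_2 = \overline{L}_1$ are skew complex conjugate lines, so there is no real coordinate system in which $L_1$ and $L_2$ are coordinate lines and no real block decomposition adapted to $L_1 \cup L_2$: the two diagonal blocks of $K$ are complex conjugates of one another rather than two real scalars $\kappa_1$, $\kappa_2$, and no sign condition emerges. The two surfaces you must now intersect with $W(S)$ are $X_{s2}$ and $X_i$, whose real points have different descriptions ($\abs{\mu}^2$ for $X_{s2}$, but a product of two \emph{real} planes for $X_i$), so even the parametrisation of the real loci changes. Tellingly, in this case the reality of both conics holds without invoking the positive definite member of $W(S)$ at all; the paper proves it synthetically: $X_{s1} \cap X_{s2}$ consists of two real points, which lie on $W(S) \cap X_{s2}$ because $X_{s1} = Q \subset W(S)$, and every real plane section of $X_i$ has real points because $X_i$ is a real quadratic surface with two real rulings (any real plane meets a real line of a ruling in a real point). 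You need a separate argument of this kind for the second case; the computation you set up for $Q = X_i$ does not transfer as stated.
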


\begin{proof}
    Choose coordinates such that $p_1 \coloneqq [1 : i : 0 : 0]$ and $p_2 \coloneqq [0 : 0 : 1 : i]$.
    Then $X_{\Bl(W(S))}$ is parametrised by the matrix
    \begin{equation*}
        \begin{bmatrix*}
            x_{00} &   0    & x_{02} & x_{03} \\
              0    & x_{00} & x_{12} & x_{13} \\
            x_{02} & x_{12} & x_{22} &   0    \\
            x_{03} & x_{13} &   0    & x_{22}
        \end{bmatrix*}
        \MXkern,
    \end{equation*}
    and $X_i$ is given by
    \begin{equation}
        \label{eq:Xi}
        \V(x_{00}, x_{22}, x_{02}x_{13} - x_{03}x_{12}).
    \end{equation}
    For the first case, assume that the quadratic surface~$Q$ contained in $W(S)$ is $X_i$.
    Then $W(S)$ is in the pencil of hyperplanes $\V(\delta x_{00} - \lambda x_{22})$.
    If either $\delta = 0$ or $\lambda = 0$,
    then the \locus{2} of $S$ contains two additional planes,
    so we may assume that $W(S)$ equals $\V(x_{00} - \lambda x_{22})$ for $\lambda \neq 0$.
    Hence $W(S)$ is parametrised by
    \begin{equation*}
        \begin{bmatrix*}
            x_{00} &      0 &         x_{02} &         x_{03} \\
                 0 & x_{00} &         x_{12} &         x_{13} \\
            x_{02} & x_{12} & \lambda x_{00} &              0 \\
            x_{03} & x_{13} &              0 & \lambda x_{00}
        \end{bmatrix*}
        \MXkern.
    \end{equation*}
    Thus $W(S)$ intersects $X_{s1}$ and $X_{s2}$ in the conic sections
    \begin{align*}
        &\V\big(x_{02} + x_{13}, x_{03} - x_{12}, \lambda x_{00}^2 - x_{02}^2 - x_{12}^2\big),
        \\
        &\V\big(x_{02} - x_{13}, x_{03} + x_{12}, \lambda x_{00}^2 - x_{02}^2 - x_{12}^2\big),
    \end{align*}
    both of which have real points if and only if $\lambda > 0$.
    Clearly, $W(S)$ contains a positive definite matrix for $\lambda > 0$,
    so $S$ is spectrahedral.
    Recall that a positive semidefinite matrix is the square of a real symmetric matrix.
    It follows that $W(S)$ does not contain a positive definite matrix for $\lambda < 0$,
    so $S$ is not spectrahedral.

    For the second case, assume that the quadratic surface~$Q$ contained in $W(S)$ is $X_{s1}$.
    The surfaces $X_{s1}$ and $X_{s2}$ intersect in the two points
    \begin{equation*}
        \big[\overline{p_1, \overline{p}_1, p_2} \cup \overline{\overline{p}_1, p_1, \overline{p}_2}\big]
        \quad
        \text{and}
        \quad
        \big[\overline{p_1, p_2, \overline{p}_2} \cup \overline{\overline{p}_1, \overline{p}_2, p_2}\big],
    \end{equation*}
    which are both real.
    So any hyperplane containing $X_{s1}$ intersects $X_{s2}$ in a conic section with real points.
    Finally, by \eqref{eq:Xi}, all conic sections on $X_i$ have real points.
\end{proof}

\begin{example}
    \label{ex:max-smooth-quadric}
    The matrix
    \begin{equation*}
        \begin{bmatrix}
            x_0 &  0  & x_1 & x_2 \\
             0  & x_0 & x_3 & x_4 \\
            x_1 & x_3 & x_0 &  0  \\
           x_2 & x_4 &  0  & x_0
        \end{bmatrix}
    \end{equation*}
    defines a quartic spectrahedral symmetroid in $\C\P^4$ 
    which is singular along the smooth quadratic surface~$\V(x_0, x_1x_4 - x_2x_3)$.
    The quadric is disjoint from the spectrahedron.
    
    The matrix
    \begin{equation*}
        \begin{bmatrix}
            x_0 &  0  & x_1 & x_2 \\
             0  & x_0 & x_2 & x_3 \\
            x_1 & x_2 & x_4 &  0  \\
            x_2 & x_3 &  0  & x_4
        \end{bmatrix}
    \end{equation*}
    defines a quartic spectrahedral symmetroid in $\C\P^4$ which is singular along the smooth quadratic surface~$\V\big(x_1 + x_3, x_0x_4 - x_2^2 - x_3^2\big)$.
    The quadric lies on the boundary of the spectrahedron.
\end{example}

\section{Maximality of cyclides}
\label{sec:threefolds-cyclides}

\noindent
In the classical literature,
a real quartic surface singular along a conic section without real points is known as a \emph{cyclide} \cite[Chapter~V]{Jes16}.
Typically, the plane at infinity is chosen to be the plane spanned by the conic. 
Cyclides were originally studied by Dupin \cite{Dup22},
and subsequently by Darboux in a more general setting \cite{Dar73}.
In \cite[Proposition~2.12]{HR18},
it is shown that a so-called \emph{spindle cyclide} with two real nodes
is the only type of cyclide that can occur as a spectrahedral symmetroid.
We extend the definition of a cyclide to higher dimensions:

\begin{definition}
    A \emph{cyclide} is a real, quartic hypersurface in $\C\P^n$ which is singular along a smooth, quadratic $(n - 2)$-fold with no real points.
\end{definition}

\noindent
We know from \cref{cor:max-smooth} that a symmetroidal cyclide is at most $3$-dimensional.
In this section,
we show that such a threefold cannot exist.

Suppose for contradiction that there exists a symmetroid~$S \subset \C\P^4$
that is singular along a quadratic surface~$Q$ with no real points.
Then $Q$ is necessarily smooth
and $\Bl(W(Q))$ contains two lines, $L_1$ and $L_2$,
by \cref{rmk:bl-quadric}.
Since $Q$ contains no real points,
the associated quadric $H_1 \cup H_2$ at any point on $Q$
is not the union of two real or two complex conjugate planes.
This imposes conditions on $L_1$ and $L_2$:

\begin{lemma}
    \label{lem:line-real-plane}
    A line $L \subset \C\P^n$ is contained in a real plane
    if and only if $L$ meets its complex conjugate line~$\overline{L}$.
\end{lemma}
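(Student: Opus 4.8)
The plan is to prove both implications directly by working with a parametrisation of $L$ and its conjugate. Write $L = \overline{\a, \b}$ for two points $\a, \b \in \C\P^n$, so that $\overline{L} = \overline{\overline{\a}, \overline{\b}}$. For the forward direction, suppose $L$ lies in a real plane $\Pi$. Since $\Pi = \overline{\Pi}$, we have $\overline{L} \subset \overline{\Pi} = \Pi$, so $L$ and $\overline{L}$ are two lines inside a common plane; in $\P^n$ two lines in a plane always meet, hence $L \cap \overline{L} \neq \emptyset$.

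For the converse, assume $L$ meets $\overline{L}$ in a point $p$. I would split into two cases according to whether $p$ is real. If $p$ is real, pick any point $\a \in L$ with $\a \neq p$; then $\overline{\a} \in \overline{L}$, and the three points $p$, $\a$, $\overline{\a}$ are distinct (since $\a \notin \overline{L}$ unless $L = \overline{L}$, which we treat separately). The plane $\Pi \coloneqq \overline{p, \a, \overline{\a}}$ is fixed by conjugation because conjugation permutes its three spanning points, so $\Pi$ is real; and $L = \overline{p, \a} \subset \Pi$. If instead $L = \overline{L}$ as a set but consists of non-real points, then conjugation acts on $L \cong \P^1$ as an antiholomorphic involution, which necessarily has a real fixed point on $\P^1(\C)$ — contradicting that $L$ has no real points — so this subcase cannot produce a line with no real points and the claimed equivalence is still consistent; alternatively one observes $L = \overline{L}$ forces $L$ to be spanned by a real point and a conjugate pair, hence contained in a real plane. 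Finally, if $p$ is not real, then $\overline{p}$ is a second intersection point of $L$ and $\overline{L}$; two distinct lines meet in at most one point, so $L = \overline{L}$, reducing to the previous case.

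I expect the main subtlety to be the bookkeeping of degenerate configurations — when $L$ coincides with $\overline{L}$, or when the intersection point $p$ forces $L = \overline{L}$ — rather than any substantial geometric difficulty. The clean way to package the whole argument is to note that $L \cap \overline{L} \neq \emptyset$ implies $L + \overline{L}$ (the span) has dimension at most $2$, i.e. is a line or a plane; in either case $L + \overline{L}$ is conjugation-invariant, hence defined over $\R$, and $L$ sits inside this real linear space of dimension $\leqslant 2$, which is contained in some real plane. That reformulation dispatches all cases uniformly and is the version I would write up.
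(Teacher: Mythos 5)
Your final packaged argument \dash $L \cap \overline{L} \neq \varnothing$ forces the span of $L$ and $\overline{L}$ to be a conjugation-invariant linear space of dimension at most $2$, hence real, hence contained in a real plane containing $L$ \dash is correct and is essentially the paper's own proof, which likewise observes that $L$ and $\overline{L}$ span a real plane in the non-real case and that if $L$ lies in a real plane $H$ then so does $\overline{L}$, so the two lines meet. One caveat on your intermediate case analysis: the claim that an antiholomorphic involution of $\P^1$ must have a fixed point is false in general (e.g.\ $[z_0:z_1]\mapsto[-\overline{z}_1:\overline{z}_0]$ is fixed-point-free), though it does no harm here, since a line with $L=\overline{L}$ is cut out by a conjugation-stable system of linear forms and is therefore a real line with real points, and in any case your uniform span argument bypasses that branch entirely.
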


\begin{proof}
    A real line is trivially contained in a real plane.
    If $L$ is not real and meets $\overline{L}$ in a point,
    then $L$ and $\overline{L}$ span a plane which is real.
    If $L$ is contained in a real plane~$H$\Hkern,
    then $\overline{L}$ is also contained in $H$ and therefore meets $L$.
\end{proof}

\begin{lemma}
    \label{lem:lines-conjugate-planes}
    Let $L_1, L_2 \subset \C\P^n$ be two lines.
    Then there exists a plane~$H$ such that $L_1 \subset H$ and $L_2 \subset \overline{H}$
    if and only if $L_1$ meets $\overline{L}_2$.
\end{lemma}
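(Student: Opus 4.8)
The plan is to follow the same pattern as the proof of \cref{lem:line-real-plane}, using that the requirement $L_2 \subset \overline{H}$ is, after complex conjugation, the same as $\overline{L}_2 \subset H$. This reduces the claim to a purely linear statement about two lines and a plane in $\C\P^n$, with the reality structure entering only through conjugation bookkeeping.

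For the forward implication, I would assume a plane~$H$ with $L_1 \subset H$ and $L_2 \subset \overline{H}$ is given. Conjugating the latter inclusion gives $\overline{L}_2 \subset H$. Thus $L_1$ and $\overline{L}_2$ are two lines lying in the projective plane $H \cong \P^2$, and by the dimension formula for linear subspaces of $\P^2$ we get $\dim(L_1 \cap \overline{L}_2) \geqslant 1 + 1 - 2 = 0$, so $L_1$ meets $\overline{L}_2$.

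For the converse, let $p \in L_1 \cap \overline{L}_2$. Since $L_1$ and $\overline{L}_2$ both pass through $p$, their linear span has dimension at most~$2$. If that span is a plane, take $H$ to be it; if instead $L_1 = \overline{L}_2$, take $H$ to be any plane containing this common line. In both cases $L_1 \subset H$ and $\overline{L}_2 \subset H$, hence $L_2 = \overline{\overline{L}_2} \subset \overline{H}$, as desired.

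I do not expect a genuine obstacle here: the argument is elementary linear algebra. The only point requiring a moment of care is the degenerate case $L_1 = \overline{L}_2$, where the two lines do not span a plane on their own and one must enlarge their common span to a plane — which is possible since $n \geqslant 2$.
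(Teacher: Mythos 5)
Your proposal is correct and follows essentially the same route as the paper: conjugate the inclusion $L_2 \subset \overline{H}$ to reduce to the statement that $L_1$ and $\overline{L}_2$ meet if and only if they lie in a common plane, treating the degenerate case $L_1 = \overline{L}_2$ separately. The only difference is that you spell out the dimension count for two lines in a plane, which the paper leaves implicit.
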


\begin{proof}
    If $L_1 = \overline{L}_2$, then $H$ trivially exits.
    If $L_1 \ne \overline{L}_2$ meet,
    then they span a plane~$H$\Hkern.
    Hence, $L_2$ is contained in the complex conjugate plane~$\overline{H}$\Hkern.
    Conversely, if $L_1 \subset H$ and $L_2 \subset \overline{H}$\Hkern,
    then $\overline{L}_2 \subset H$\Hkern,
    so $L_1$ and $\overline{L}_2$ meet.
\end{proof}

\begin{remark}
    \label{rmk:bl-quadric-no-real}
    Consider $\P^9$ as the set of quadrics in $\P^3$\Pnkern.
    Let $Q \subset \P^9$ be a smooth quadratic surface of \quadric{2}s.
    By \cref{rmk:bl-quadric},
    $\Bl(Q)$ consists of two lines, $L_1$ and $L_2$.
    Suppose that $Q$ contains no real points.
    A \quadric{2} is real if and only if it is the union of two real or two complex conjugate planes.
    By \cref{lem:line-real-plane,lem:lines-conjugate-planes},
    $Q$ contains no real points if and only if $L_1 \cap \overline{L}_2 = \varnothing$ and at least one of the intersections~$L_1 \cap \overline{L}_1$ or $L_2 \cap \overline{L}_2$ is empty.
    In particular,
    $L_1$ and $L_2$ are not complex conjugates and at most one of them is real.
\end{remark}

\noindent
All symmetroids that are cyclides are covered by the original $2$-dimensional definition of a cyclide:

\begin{proposition}
    \label{prop:cyclide}
    Let $S \subset \C\P^n$ be a symmetroidal cyclide.
    Then $S$ is a surface.
\end{proposition}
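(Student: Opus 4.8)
The plan is to argue by contradiction, assuming $n \geqslant 3$ and deriving a contradiction by pushing everything down to the known two-dimensional case. By \cref{cor:max-smooth}, a symmetroidal cyclide in $\C\P^n$ satisfies $n \leqslant 4$, so it suffices to rule out $n = 3$ and $n = 4$. Since a cyclide is singular along a \emph{smooth} quadric with no real points, \cref{lem:quadric-in-rank-2} tells us $Q$ consists of \point{2}s, and \cref{rmk:bl-quadric} gives $\Bl(W(Q)) = L_1 \cup L_2$ for two lines; \cref{rmk:bl-quadric-no-real} then forces $L_1 \cap \overline{L}_2 = \varnothing$ together with (say) $L_1 \cap \overline{L}_1 = \varnothing$. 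Combined with $n = 4$ this puts us in the situation of \cref{lem:four-basepoints,prop:two-conics}: $\Bl(W(S))$ is four points and $S$ is singular along two further conics, but we want to track the \emph{reality} of these base points and conics.

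First I would handle $n = 4$. As noted before \cref{prop:configurations}, if $S$ had a nonempty spectrahedron the four base points would be two complex conjugate pairs; but in our setting we are \emph{not} assuming $S$ is spectrahedral, only that it is a cyclide, so I instead use the reality constraint coming from $Q$ directly. The key point is that $W(S)$ is spanned by $W(Q)$ and one more quadric $K$, and $W(S)$ is defined over $\R$ (being a symmetroid in the classical sense with symmetric $A_i$), so $\Bl(W(S)) = \{p_1, p_2, p_3, p_4\}$ is a conjugation-invariant set of four points lying two on $L_1$ and two on $L_2$. Because $L_1 \cap \overline{L}_1 = \varnothing$, the line $\overline{L}_1$ does not equal $L_1$ and does not meet it; since conjugation must permute the four base points and send the two points on $L_1$ to two collinear points, those two images must be the two points on $L_2$ — forcing $\overline{L}_1 = L_2$, i.e. $L_1$ and $L_2$ are complex conjugates. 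But \cref{rmk:bl-quadric-no-real} explicitly says $L_1$ and $L_2$ are \emph{not} complex conjugates. This contradiction eliminates $n = 4$.

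Next I would dispatch $n = 3$, which is exactly the classical two-dimensional case addressed in \cite[Proposition~2.12]{HR18}: there it is shown that among cyclides, only the spindle cyclide occurs as a spectrahedral symmetroid, and in particular a symmetroidal surface singular along a real conic with no real points does exist but the spectrahedral ones are classified — however the statement we must prove, $\dim S \leqslant 2$, is vacuous for $n = 3$ and \emph{true}, so there is nothing to rule out here; the real content of \cref{prop:cyclide} is precisely the elimination of $n = 4$ carried out above (the case $n = 5$ being already excluded by \cref{cor:max-smooth}). Thus $n \leqslant 2$ and $S$ is a surface.

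The main obstacle is the reality bookkeeping in the $n = 4$ step: one must be careful that conjugation genuinely permutes the four base points as an abstract set and that "two points on a line" behaves correctly under conjugation, ruling out the degenerate possibility that conjugation fixes each of $L_1$, $L_2$ setwise while $L_1$ itself fails to be real — but that is exactly excluded by $L_1 \cap \overline{L}_1 = \varnothing$, since a conjugation-invariant non-real line must meet its conjugate. Once that case analysis is pinned down, the contradiction with \cref{rmk:bl-quadric-no-real} is immediate.
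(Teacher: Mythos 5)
Your overall strategy is the paper's: reduce to $n \leqslant 4$ via \cref{cor:max-smooth} (the case $n = 3$ being vacuous, as you note) and then contradict \cref{rmk:bl-quadric-no-real} using the fact that $S$, and hence $Q$, is defined over $\R$. The mechanism you use for $n = 4$ is different, however. The paper works directly with the two lines: since $Q$ is real, $W(Q)$ spans a real web $W\mkern-4mu$, which is therefore spanned by real quadrics, so $\Bl(W) = \Bl(W(Q)) = L_1 \cup L_2$ is conjugation-invariant; hence $L_1$ and $L_2$ are either both real or complex conjugates, and both options are excluded by \cref{rmk:bl-quadric-no-real}. You instead push the conjugation action down to the four \basepoints of $W(S)$ and recover $\overline{L}_1 = L_2$ from the images of the two points on $L_1$. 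That reaches the same contradiction, but it quietly imports the configuration of \cref{lem:four-basepoints}, which is stated only for \emph{general} $S$: you need $K \cap L_i$ to consist of two \emph{distinct} reduced points for each $i$ (and, should $L_1$ and $L_2$ meet, none of them at $L_1 \cap L_2$), because ``two collinear image points determine the line $L_2$'' fails for a non-reduced length-$2$ scheme unless you also track tangent directions, and fails outright if a base point is shared between the two lines. The proposition is asserted for \emph{every} symmetroidal cyclide, so these degenerations must be addressed.

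The gap is patchable \textemdash{} the conjugate of the length-$2$ subscheme $K \cap L_1 \subset L_1$ is a length-$2$ subscheme of $\overline{L}_1$ disjoint from $L_1$, hence equal to the component of $\Bl(W(S))$ supported on $L_2$, and a length-$2$ subscheme of $\P^3$ spanning a line determines that line even when non-reduced \textemdash{} but as written your argument only covers the generic case. The paper's version sidesteps the issue entirely because it never descends to the \basepoints: the reality of $L_1 \cup L_2$ as a set is all that is used, and that follows from $Q$ alone without reference to the extra generator $K$ of $W(S)$.
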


\begin{proof}
    Assume for contradiction that $S$ is a threefold.
    Let $Q \subseteq \Sing(S)$ be the quadratic surface with no real points.
    Since $S$ is real, so is $Q$.
    Hence $W(Q)$ spans a real web~$W \subset \P^9$ of quadrics.
    Because $W$ is real and linear,
    it is spanned by real quadrics.
    Thus $\Bl(W)$ is real.
    Since $W$ is spanned by $W(Q)$,
    we have $\Bl(W) = \Bl(W(Q))$.
    Hence the two lines in $\Bl(W(Q))$ are either both real or complex conjugates.
    This contradicts \cref{rmk:bl-quadric-no-real}.
\end{proof}

\chapter{Maximality of reducible quadrics}
\label[section]{sec:max-red}

\cref{sec:max-irred} finds the maximal dimension
of irreducible quadratic components in the singular locus of a quartic symmetroid.
It is natural to ask whether this is the maximal dimension among all double quadrics.
We show here that there exists a quartic symmetroid~$S \subset \P^5$
that is singular along a reducible, quadratic threefold~$Q$
and that $S$ is maximal.

We argue that the points in $Q$ are generically \point{3}s:
Let $H$ be a $3$-space in $Q$.
Assume for contradiction that $H$ is contained in the \locus{2} of $S$\Skern.
By \cref{ex:double-plane},
$H$ cannot contain infinitely many \point{1}s.
Because $H$ contains finitely many \point{1}s,
\cref{lem:hyperplane-in-base} implies that $\Bl(W(H))$
contains a plane.
Since $W(S)$ is generated by $W(H)$ and two quadrics,
it follows that $\Bl(W(S))$ consists of four coplanar points,
so $X_{\Bl(W(S))}$ is $5$-dimensional.
Hence $W(S) = X_{\Bl(W(S))}$
and $S$ is the discriminant of $X_{\Bl(W(S))}$.
In the notation of \cref{rmk:four-basepoints},
the only threefold that the discriminant of $X_{\Bl(W(S))}$ is singular along is $X$\Xkern,
which is not a quadric.

\begin{proposition}
    \label{prop:max-union}
    Let $S \subset \P^n$ be an irreducible, quartic symmetroid.
    Assume that $Q$ is an $(n - 2)$-dimensional quadric in the singular locus of $S$.
    Then $n \leqslant 5$.
\end{proposition}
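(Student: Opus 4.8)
The plan is to treat separately the cases where the double quadric $Q$ is irreducible, reducible and reduced, or non-reduced. If $Q$ is irreducible, \cref{lem:quadric-in-rank-2} places it in the \locus{2} of $S$, and the argument in the proof of \cref{cor:max-smooth} then applies: a general linear section through a general point $p \in Q$ yields a quartic symmetroid in $\P^4$ singular along the quadratic surface $Q \cap T_pQ$, which carries at most finitely many \point{1}s (an irreducible $Q$ cannot contain a surface of \point{1}s, as its secant variety would have to be a component of $S$ or of $Q$), so \cref{prop:max-sing-quadric} gives $\dim Q \leq 2$, i.e.\ $n \leq 4$. If $Q$ is non-reduced it is a double \((n - 2)\)-plane, and the computation preceding the statement (using \cref{ex:double-plane,lem:hyperplane-in-base}) bounds $n$. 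So it remains to show $n \leq 5$ when $Q = H_1 \cup H_2$ with $H_1 \neq H_2$ reduced; suppose for contradiction that $n \geq 6$.

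Since $S$ is not a cone, $W$ embeds $\P^n$ linearly into the $\P^9$ of quadrics of $\P^3$, so $\dim W(L) = \dim L$ for every linear $L$; and as $Q$ is a quadric, $H_1$ and $H_2$ are hyperplanes of the \((n - 1)\)-plane $\langle Q\rangle$, so $H_1 \cap H_2$ is an \((n - 3)\)-plane and each $H_i \cong \P^{n - 2}$ has dimension at least $4$. If some $H_i$ lies in the \locus{2}, then $W(H_i)$ is a linear system whose general member has rank exactly $2$ (a system of \point{1}s would be a single point, the \point{1}s forming a Veronese threefold with no lines), so by \cref{lem:hyperplane-in-base} $\Bl(W(H_i))$ contains a plane or a double line; in the first case every quadric of $W(H_i)$ is divisible by a fixed linear form, in the second it lies in the square of the ideal of a line, and in both cases $W(H_i)$ is contained in a $\P^3$. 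Hence $\dim H_i \leq 3$, a contradiction.

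So neither $H_i$ lies in the \locus{2}, and a general point of each $H_i$ is a \node{3}. The \point{3}s of $H_i$ are dense in $H_i$; by \cref{lem:basepoint-singularity} the singular point of $Q(x)$ is a \basepoint of $W(S)$ for each such $x$, by \cref{lem:base-curve} these points cannot trace out a curve, so they coincide in a single \basepoint $p_i$, and since being singular at $p_i$ is a closed condition every quadric of $W(H_i)$ is singular at $p_i$ (this is \cref{lem:common-singularity} when $H_i$ consists only of \point{3}s). If $p_1 = p_2 =: p$, then every quadric in $\langle W(H_1), W(H_2)\rangle = W(\langle Q\rangle)$ is singular at $p \in \Bl(W(S))$, so by \cref{lem:basepoint-singularity} the \((n - 1)\)-plane $\langle Q\rangle$ lies in $\Sing(S)$, impossible since $\dim\Sing(S) \leq n - 2$. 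Hence $p_1 \neq p_2$, and every quadric of $W(H_1 \cap H_2) = W(H_1) \cap W(H_2)$ is singular at both $p_1$ and $p_2$, hence along the fixed line $\overline{p_1, p_2}$; but the quadrics of $\P^3$ singular along a fixed line form only a $\P^2$ in $\P^9$, so $n - 3 = \dim(H_1 \cap H_2) \leq 2$, i.e.\ $n \leq 5$ — the required contradiction.

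The step I expect to be the main obstacle is the passage from "a general point of $H_i$ is a \node{3}" to "all of $W(H_i)$ shares a common singular \basepoint $p_i$": one must rule out a varying vertex via \cref{lem:base-curve} and then take closures to absorb a possible sub-locus of \point{2}s inside $H_i$, since \cref{lem:common-singularity} is stated only for linear spaces consisting entirely of \point{3}s. The remaining steps are elementary dimension counts with the rank strata in the $\P^9$ of quadrics.
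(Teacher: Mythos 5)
Your treatment of the two reduced cases is correct. The irreducible case rightly reduces to \cref{lem:quadric-in-rank-2,cor:max-smooth}, and for $Q = H_1 \cup H_2$ with $H_1 \neq H_2$ your argument is sound but genuinely different from the paper's: the paper places $W(S)$ inside $X_{\{p_1,p_2\}} = \P^7$\Pnkern, bounds each $H_i$ by the $4$-space $Y_i$ of quadrics singular at $p_i$, and excludes equality by showing that the span of $Y_1 \cup Y_2$ is a $6$-space whose discriminant is reducible by \cref{lem:base-curve}; you instead send $W(H_1 \cap H_2)$ into the $\P^2$ of quadrics singular along $\overline{p_1, p_2}$ and read off $n - 3 \leqslant 2$ directly. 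Your version is shorter, and your care in extending \cref{lem:common-singularity} to a linear space whose \emph{general} point has rank~$3$ is warranted. One small imprecision: in case~3 of \cref{lem:hyperplane-in-base} the double line produced by the proof is the divisor $2L$ on a plane $H = \V(h)$, so the quadrics containing it form the $\P^4$ of quadrics in $(h, \ell^2)$, not the $\P^2$ you describe; your conclusion $\dim H_i \leqslant 3$ survives only because case~3 cannot occur for a linear system of dimension at least $4$ (a $3$-dimensional family of \quadric{1}s would have to be the whole Veronese threefold, which spans $\P^9$), and that observation should be made explicit.

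The genuine gap is the non-reduced case $Q = 2H$\Hkern. You dismiss it by citing ``the computation preceding the statement'', but that computation does something else: for $n = 5$ it shows that a $3$-space inside $Q$ cannot lie in the \locus{2}; it places no bound on $n$. With your own tools the double-plane case yields only one common vertex: the quadrics of $W(H)$ all become singular at a single \basepoint~$p$, so $H$ lands in the $\P^5$ of quadrics singular at $p$, giving merely $n - 2 \leqslant 5$, i.e.\ $n \leqslant 7$. Your decisive step \dash two distinct vertices and the $\P^2$ of quadrics singular along $\overline{p_1, p_2}$ \dash has no analogue when $p_1 = p_2$. The paper closes this case with \cref{lem:codim-2-singular-subspace}: since the quadrics singular at $p$ cut $W(S)$ in codimension~$2$, the \baselocus acquires a length-$2$ scheme $2p$, so $W(S) \subseteq X_{2p} = \P^7$\Pnkern, and one replaces $Y_1 \cup Y_2$ by a double $4$-space $2Y$ of quadrics singular at $p$ and containing the line spanned by $2p$, which again spans a $6$-space with reducible discriminant. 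Some such additional input is required; \cref{ex:double-P3} shows the double $3$-space really occurs at $n = 5$, so this case cannot be waved away.
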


\begin{proof}
    Consider $\P^9$ as the set of all quadrics in $\P^3$\Pnkern.
    Our aim is to find a maximal linear subspace $W \subset \P^9$
    such that the discriminant of $W$ is singular along a quadric
    of dimension $\dim(W) - 2$.

    Assume first that $Q \coloneqq H_1 \cup H_2$ is the union of
    two linear subspaces of \point{3}s.
    \cref{lem:common-singularity}
    implies that the quadrics associated to $H_i$
    share a common singular point~$p_i$,
    which is a \basepoint for $W(S) \subset \P^9$\Pnkern.
    Hence $W(S) \subset X_{\{p_1, p_2\}} = \P^7$\Pnkern.
    Let $Y_i \subset X_{\{p_1, p_2\}}$ be the $4$-space
    of quadrics that are singular at $p_i$.
    We identify $H_i$ with $W(S) \cap Y_i$.
    The set of quadrics that are singular at both $p_1$ and $p_2$ forms a double plane.
    Hence $Y_1$ and $Y_2$ intersect in a plane,
    so the span $\overline{Y_1 \cup Y_2}$ of $Y_1 \cup Y_2$ is a $6$-space.
    All quadrics in $\overline{Y_1 \cup Y_2}$ contain
    the line spanned by $p_1$ and $p_2$,
    so the discriminant of $\overline{Y_1 \cup Y_2}$ is reducible by \cref{lem:base-curve}.
    Thus $W \neq \overline{Y_1 \cup Y_2}$
    and we conclude that $n \leqslant 5$.

    By \cref{lem:codim-2-singular-subspace},
    if $Q$ is a double linear subspace,
    the situation is analogous.
    Indeed,
    $\Bl(W(S))$ contains a scheme~$2p$ of length~$2$ with support in one point~$p$.
    Let $Y \subset X_{2p} = \P^7$ be the $4$-space
    of quadrics that are singular at $p$
    and contain the line spanned by $2p$.
    Let $D$ be the discriminant of $X_{2p}$.
    The singular locus of $D$ contains a double subspace~$2Y$
    whose underlying, reduced space is $Y$\Ykern.
    The fourfold~$2Y$ spans a $6$-space.
    By letting $2Y$ play the role of $Y_1 \cup Y_2$,
    we get the same conclusion as in the previous paragraph.
\end{proof}

\section{Fourfolds}
\label{sec:fourfolds}

Let $S$ be a quartic symmetroid of maximal dimension that is singular
along a reducible quadric of codimension~$1$ in $S$\Skern.
Then $S$ is at most $4$-dimensional by \cref{prop:max-union}.
In this section,
we describe the rest of the singular locus of such a fourfold.
The existence is demonstrated in \cref{ex:double-P3,ex:two-P3s}.

\begin{proposition}
    \label{prop:additional-quad-surf}
    Let $S \subset \P^5$ be a general quartic symmetroid
    singular along two $3$-spaces, $H_1$ and $H_2$.
    Then $S$ is singular along an additional smooth quadratic surface.
\end{proposition}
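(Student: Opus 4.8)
I need to show that a general quartic symmetroid $S \subset \P^5$ singular along two $3$-spaces $H_1, H_2$ has an additional smooth quadric surface in its singular locus. Let me think about what structure I have. By Lemma~\ref{lem:common-singularity}, the quadrics in $W(H_i)$ share a common singular point $p_i$, which is a basepoint of $W(S)$. So $W(S) \subset X_{\{p_1,p_2\}} \cong \P^7$. From the proof of Proposition~\ref{prop:max-union}, inside $X_{\{p_1,p_2\}}$ I have two $4$-spaces $Y_1, Y_2$ of quadrics singular at $p_1$, resp. $p_2$, meeting in a plane; and $H_i = W(S) \cap Y_i$, which is a $3$-space since $W(S)$ is a hyperplane in $X_{\{p_1,p_2\}} = \P^7$. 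So the plan is: identify the rank-$2$ locus of $X_{\{p_1,p_2\}}$, describe its components, and intersect with the hyperplane $W(S)$, exactly in the spirit of Remark~\ref{rmk:four-basepoints} and the proof of Proposition~\ref{prop:two-conics}.

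**The key steps.** First I would give an explicit parametrization of $X_{\{p_1,p_2\}}$: choosing coordinates so $p_1 = [1:0:0:0]$ and $p_2 = [0:1:0:0]$, the quadrics through both have matrix with zero first two diagonal entries, giving a $7$-dimensional space. Second, I would enumerate the rank-$\leqslant 2$ components of this $X_{\{p_1,p_2\}}$. Besides $Y_1$ and $Y_2$ (which contribute the components $H_1$, $H_2$ after cutting with $W(S)$), there should be a component of rank-$2$ quadrics of the form "plane through $p_1$ and $p_2$ union another plane" — i.e. unions $\overline{p_1,p_2,\ell} \cup H$ where $\ell$ varies; but more importantly a component parametrizing $H_a \cup H_b$ where $H_a$ passes through $p_1$ only, $H_b$ through $p_2$ only — by analogy with the $X_{ij}$ in Remark~\ref{rmk:four-basepoints}, this should be a quadratic surface. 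Third, I would show that a general hyperplane $W(S)$ through the given configuration meets this extra component in a smooth conic, while meeting the "line-$\overline{p_1,p_2}$" type components only inside $H_1 \cup H_2$ (so they contribute nothing new); this is a dimension/Bézout count just as in Proposition~\ref{prop:two-conics}. Finally, I would invoke genericity to conclude that $S$ has no singularities beyond $H_1$, $H_2$, and this conic-parametrized quadric surface, and check the surface is smooth (the generic conic in a smooth quadric surface component of the rank-$2$ locus is smooth).

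**The main obstacle.** The delicate point is getting the combinatorial/geometric structure of the rank-$2$ locus of $X_{\{p_1,p_2\}} = \P^7$ exactly right — in particular identifying which component is a quadratic \emph{surface} (dimension $2$), because after cutting by the hyperplane $W(S)$ I need the intersection to be a curve (conic) that is then swept into a surface of rank-$2$ points in $S$. Wait — I must be careful: the rank-$2$ \emph{points of $S$} form the preimage; if $W(S) \cap (\text{component})$ is a conic in $W(S) = \P^n$-coordinates, that conic \emph{is} the quadratic surface in $S$, since $W(S) \cong S$ as varieties via $x \mapsto [Q(x)]$... no, $W(S) \cong \P^n$ and $S \subset \P^n$; the rank-$2$ locus sits in $\P^n = W(S)$. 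So a conic's worth of rank-$2$ quadrics gives a conic in $\P^5$, which is a curve, not a surface. Let me reconsider: I actually want the component of the rank-$2$ locus of $X_{\{p_1,p_2\}}$ that meets the hyperplane $W(S)$ in a \emph{quadratic surface}, hence that component must itself be a $3$-fold. So the real task is to find, among the rank-$2$ components of $X_{\{p_1,p_2\}} \cong \P^7$, a threefold component (other than $Y_1, Y_2$ which are $4$-folds giving $H_1,H_2$) whose hyperplane section is a smooth quadric surface. The candidate: quadrics $\overline{p_1,p_2,\ell} \cup H$ — for fixed line $\ell$ through neither point this is the set of $\{(\text{plane spanned by } p_1,p_2,\text{and a point of }\ell) \cup \text{arbitrary plane}\}$... the dimension bookkeeping here is exactly where I expect to spend real effort, and I would cross-check it against the explicit $7 \times$ matrix parametrization and a direct Bézout computation of $W(S) \cap (\text{rank-}2\text{ locus})$ before asserting the surface is smooth and of the right dimension.
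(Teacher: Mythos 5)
There is a genuine gap, and it sits exactly where you flagged the ``main obstacle''. First, a dimension error derails your plan: $W(S)$ is a $\P^5$ (since $S \subset \P^5$), while $X_{\{p_1,p_2\}}$ is a $\P^7$\Pnkern, so $W(S)$ has codimension~$2$ in $X_{\{p_1,p_2\}}$, not codimension~$1$ as you assert. Your proposed B{\'e}zout count of a hyperplane section of the rank-$2$ locus of $X_{\{p_1,p_2\}}$ therefore does not apply; worse, the intersections $W(S) \cap Y_i = H_i$ are already excess-dimensional ($\P^3$ instead of the expected $\P^2$), so the linear section is far from generic and no naive degree computation will locate the extra component.

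The missing idea is \cref{lem:codim-2-singular-subspace}. Since each $H_i$ has codimension~$2$ in $W(S)$, that lemma upgrades the basepoints $p_i$ to length-$2$ schemes $2p_i$ contained in $\Bl(W(S))$, each singling out a line $L_i$ spanned by $2p_i$. Then $\Bl(W(S))$ has length~$4$, so $X_{\Bl(W(S))}$ is a $\P^5$ and hence equals $W(S)$: there is no further linear section to take at all. The rank-$2$ locus is then read off from the degenerate version of \cref{rmk:four-basepoints}: a rank-$2$ quadric $H \cup H'$ containing $2p_i$ either has $L_i \subset H$ or is singular at $p_i$ (and so lies in $H_i$), and the only quadratic surface of rank-$2$ quadrics not absorbed into $H_1 \cup H_2$ is the set of pairs $H \cup H'$ with $L_1 \subset H$ and $L_2 \subset H'$\mkern-5mu, which is smooth by \cref{rmk:bl-quadric}. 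Your candidate family $\overline{p_1,p_2,\ell} \cup H$ is not this component \dash the relevant lines are the infinitesimal directions spanned by $2p_1$ and $2p_2$, not planes through both basepoints \dash and without the length-$2$ scheme structure you have no way to identify them.
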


\begin{proof}
    Since $S$ is general,
    the only other singularities are \point{2}s
    outside of $H_1$ and $H_2$.
    We can identify the \point{2}s of $S$
    with the intersection of $W(S)$ with the \locus{2} of $X_{\Bl(W(S))}$.

    \cref{lem:common-singularity} tells us that
    the quadrics associated to $H_i$, for $i = 1, 2$, 
    have a common singularity~$p_i$.
    By applying \cref{lem:codim-2-singular-subspace} to $H_1$ and $H_2$,
    we get that $\Bl(W(S))$
    consists of two schemes $2p_i$
    of length~$2$ with support in $p_i$,
    for $i = 1, 2$.
    Since $\Bl(W(S))$ is of length~$4$,
    $X_{\Bl(W(S))}$ is a degeneration of $X_{\Bl}$ in \cref{rmk:four-basepoints}.
    Moreover, $W(S) = X_{\Bl(W(S))}$.

    For a \quadric{2}~$H \cup H' \subset \P^3$ to contain the scheme~$2p_1$,
    there are two possibilities:
    Either the line spanned by $2p_1$ is contained in the plane~$H$\Hkern,
    or both $H$ and $H'$ contain $p_1$.
    In the latter case,
    $H \cup H'$ is singular at $p_1$,
    so $[H \cup H']$ is contained in $H_1$,
    the set of all quadrics in $W(S)$ singular at $p_1$.
    Hence only one of the quadratic surfaces described in \cref{rmk:four-basepoints}
    is not contained in $H_1$ or $H_2$:
    The set of all unions $H \cup H'\mkern-5mu$,
    where $H$ contains the line spanned by $2p_1$
    and $H'$ contains the line spanned by $2p_2$.
\end{proof}

\begin{proposition}
    \label{prop:sing-loc-double-P3}
    Let $S \subset \P^5$ be a general quartic symmetroid
    with a double $3$-space~$2H$ in $\Sing(S)$.
    Then $S$ is singular along an additional plane.
\end{proposition}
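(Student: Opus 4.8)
The plan is to mimic the structure of the proof of \cref{prop:additional-quad-surf}, but with both length-$2$ schemes collapsed onto a single point. First I would apply \cref{lem:common-singularity} to the $3$-space~$H$: since $H$ is a linear space of \point{3}s in $\Sing(S)$, the quadrics in $W(H)$ share a common singular point~$p$, which is a \basepoint for $W(S)$. Next, because $S$ is singular along the \emph{double} $3$-space~$2H$, the subspace $V \subset W(S)$ of quadrics singular at $p$ should have codimension exactly~$2$ in $W(S)$, so \cref{lem:codim-2-singular-subspace} applies and tells us that $\Bl(W(S))$ contains a length-$2$ scheme $2p$ supported at~$p$. Since $S$ is general and singular along $2H$, the \baselocus~$\Bl(W(S))$ should have length exactly~$4$ and consist of two copies of this scheme, i.e.\ a length-$4$ scheme $2p + 2p'$ with $p' $ infinitely near, or more precisely a further degeneration that one reads off from the double structure; in any case $W(S) = X_{\Bl(W(S))}$ and $X_{\Bl(W(S))}$ is a degeneration of the space~$X_{\Bl}$ from \cref{rmk:four-basepoints}.

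The heart of the argument is then the same bookkeeping as in \cref{prop:additional-quad-surf}: the \point{2}s of $S$ are identified with $W(S) \cap (\text{\locus{2} of } X_{\Bl(W(S))})$, and one enumerates the quadratic surfaces of \point{2}s in $X_{\Bl(W(S))}$ as degenerations of the surfaces $X_{ij}$ of \cref{rmk:four-basepoints}. A \quadric{2}~$H' \cup H'' \subset \P^3$ lying in the \locus{2} of $X_{\Bl(W(S))}$ either has one of its two planes containing the line spanned by $2p$ (together with the rest of the length-$4$ scheme appropriately distributed), or is singular at~$p$ and hence lies inside the copy of $W(S)\cap V$ that we have identified with~$2H$. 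Discarding the components that lie inside $2H$, exactly one surface survives, and its points are \point{2}s, not \point{3}s — so it is an \emph{additional} plane-or-quadric-surface component. Finally one checks that this surviving surface is a plane (rather than a smooth quadric) because of the extra degeneracy forced by the coincidence $p_1 = p_2 = p$ in the reducible-case picture: the two lines spanned by $2p_1$ and $2p_2$ there become a single line (or a length-$2$ jet along a line), which collapses the relevant $X_{ij}$ onto a linear $\P^2$ inside $X_{2p}$.

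The main obstacle I anticipate is pinning down the precise scheme structure of $\Bl(W(S))$ at~$p$ and verifying that the codimension hypothesis of \cref{lem:codim-2-singular-subspace} is met for a \emph{general} such $S$ — one must rule out that the subspace of quadrics singular at $p$ drops in codimension (which would happen e.g.\ if $2H$ were forced inside the \locus{2}, a possibility excluded by the discussion preceding \cref{prop:max-union} via \cref{ex:double-plane} and \cref{lem:hyperplane-in-base}, but which still needs to be invoked here). Once that scheme-theoretic picture is fixed and $W(S) = X_{\Bl(W(S))}$ is established, the enumeration of \point{2}-surfaces is a routine degeneration of \cref{rmk:four-basepoints}, and identifying the survivor as a plane is a short coordinate computation analogous to the one in \cref{prop:configurations}.
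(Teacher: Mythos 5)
Your opening steps match the paper's: \cref{lem:common-singularity} gives the common singular point~$p$ (using the discussion at the start of \cref{sec:max-red} to know that $H$ consists generically of \point{3}s, so that the quadrics singular at~$p$ cut out exactly $H$, the codimension hypothesis of \cref{lem:codim-2-singular-subspace} holds, and $\Bl(W(S))$ acquires the length-$2$ scheme~$2p$). The dichotomy you state for a \quadric{2} containing $2p$ (either one plane contains the line~$L$ spanned by $2p$, or the quadric is singular at~$p$ and hence lies in $H$) is also exactly the paper's. The gap is in the step you call routine: you assert that $\Bl(W(S))$ has length~$4$ and that ``in any case $W(S) = X_{\Bl(W(S))}$'', so that the surfaces of \point{2}s can be read off as degenerations of the $X_{ij}$ of \cref{rmk:four-basepoints}. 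This is false in the double-$3$-space case. The remark following \cref{ex:double-P3} points out that here $\Bl(W(S))$ is too small to determine $S$: it has length~$3$ in the example, so $X_{\Bl(W(S))}$ is a $\P^6$ strictly containing $W(S) = \P^5$, and the classification of \cref{rmk:four-basepoints}, which presupposes four \basepoints, is not available. The coincidence $p_1 = p_2$ does not merge the two length-$2$ schemes of \cref{prop:additional-quad-surf} into a length-$4$ scheme; the double structure on $H$ only guarantees the single scheme~$2p$.

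The paper replaces your enumeration with a B\'ezout count inside $X_{2p} = \P^7$\Pnkern: the pairs $H_1 \cup H_2$ with $H_1 \supseteq L$ form a fourfold~$Z_L$ of degree~$4$; by the dichotomy, every \point{2} of $S$ outside $H$ lies on $W(S) \cap Z_L$; the $\P^5 = W(S)$ meets $Z_L$ in a quartic surface, of which the part lying in $Z_L \cap Y$ (a threefold of degree~$3$, with $Y \cap W(S) = H$) is a cubic surface inside $H$\Hkern, leaving a residual surface of degree~$1$, i.e.\ a plane. Without this degree computation, or an equivalent one, you have no control over the ``surviving'' component: your sketch leaves it as a plane-or-quadric to be identified by an unspecified coordinate computation, which is precisely the content of the proposition.
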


\begin{proof}
    Since $S$ is general,
    the only other singularities are \point{2}s
    outside of $2H$\Hkern.
    We can identify the \point{2}s of $S$
    with the intersection of $W(S)$ with the \locus{2} of $X_{\Bl(W(S))}$.

    Recall from the discussion at the start of \cref{sec:max-red}
    that the points in $H$ are generically \point{3}s.
    Thus \cref{lem:common-singularity} implies that
    the quadrics associated to $H$ are singular at a point~$p$.
    By \cref{lem:codim-2-singular-subspace},
    $\Bl(W(S))$ contains a scheme~$2p$ of length~$2$ with support in $p$,
    so $X_{\Bl(W(S))} \subseteq X_{2p}$.
    For a \quadric{2}~$H_1 \cup H_2 \subset \P^3$ to contain $2p$,
    there are two possibilities:
    Either the line~$L$ spanned by $2p$ is contained in the plane~$H_1$,
    or both $H_1$ and $H_2$ contain $p$.
    In the latter case,
    $H_1 \cup H_2$ is singular at $p$,
    so $[H_1 \cup H_2]$ is contained in $H$\Hkern.
    Hence if $H_1 \cup H_2$ is the associated quadric
    to a \point{2} outside of $H$\Hkern,
    then $H_1$ contains $L$.

    The set of pairs $H_1 \cup H_2$,
    where $H_1$ contains $L$,
    forms a fourfold~$Z_L$ of degree~$4$ in $X_{2p} = \P^7$\Pnkern.
    The singular locus of $Z_L$ is the plane of pairs $H_1 \cup H_2$,
    where both $H_1$ and $H_2$ contain $L$.
    Let $Y \subset X_{2p}$ be the $4$-space of quadrics
    that are singular at $p$ and contains $L$,
    as in the proof of \cref{prop:max-union}.
    We identify $H$ as $Y \cap W(S)$.
    The intersection $Z_L \cap Y$ is the set of all unions $H_1 \cup H_2$,
    where $H_1$ contains $L$ and $H_2$ contains $p$.
    This is a divisor on $Z_L$ that contains $\Sing(Z_L)$;
    it is a threefold of degree~$3$.
    The linear system $W(S) = \P^5$ intersects $Z_L$ in a quartic surface.
    Since $W(S)$ intersects $Y$ in the $3$-space~$H$\Hkern,
    it meets $Z_L \cap Y$ in a cubic surface.
    Hence $W(S)$ intersects $Z_L$ in a plane outside of $H$\Hkern.
    This proves the claim.
\end{proof}

\begin{example}
    \label{ex:double-P3}
    The matrix
    \begin{equation*}
        \begin{bmatrix}
            x_0 + x_1 & x_0 + x_2 & x_3 & x_2 \\
            x_0 + x_2 & x_0 - x_1 & x_3 & x_2 \\
               x_3    &    x_3    & x_4 & x_5 \\
               x_2    &    x_2    & x_5 &  0
        \end{bmatrix}
    \end{equation*}
    defines a quartic symmetroid $S \subset \C\P^5$
    whose Jacobian ideal is contained in the ideal $\ip[\big]{x^2_2, x_5}$.
    In other words,
    the singular locus of $S$ contains a double $\P^3$\Pnkern.
    The symmetroid is also singular along the plane~$H_1 \coloneqq \V(x_0, x_1, x_2)$.

    The \locus{2} of $S$ consists of $H_1$,
    three times the plane~$H_2 \coloneqq \V(x_2, x_4, x_5)$
    and six times the plane~$H_3 \coloneqq \V(x_1, x_2, x_5)$.
    Note that $H_3$ equals $\Sing(Z_L)$
    in the proof of \cref{prop:sing-loc-double-P3}.
    It contains the conic~$\V\big(x_1, x_2, x_5, x_0x_4 - x_3^2\big)$ of \point{1}s.
\end{example}

\begin{remark}
    If $S$ is a quartic symmetroid singular along
    the union of two distinct linear subspaces
    with codimension~$1$ in $S$\Skern,
    then \cref{lem:codim-2-singular-subspace} implies that
    $\Bl(W(S))$ is a scheme of length~$4$.
    Hence $X_{\Bl(W(S))} = \P^5$\Pnkern.
    If $S$ is a fourfold,
    it follows that $W(S) = X_{\Bl(W(S))}$.
    Thus $S$ is the discriminant of $X_{\Bl(W(S))}$,
    so it is uniquely determined by $\Bl(W(S))$.

    However, if $S \subset \P^5$ is a quartic symmetroid
    with a double $3$-space in its singular locus,
    then $\Bl(W(S))$ is too small to be unique for $S$\Skern.
    In particular,
    we compute that $\Bl(W(S))$ is a scheme of length~$3$
    for $S$ in \cref{ex:double-P3}.
\end{remark}

\subsection{Spectrahedra}

Here we examine the real singularities
in the case of spectrahedral symmetroids.
If $S$ has a nonempty spectrahedron,
then the \basepoints of $W(S)$ appear in complex conjugate pairs.
It follows that the $3$-spaces in the singular locus of $S$ are also complex conjugates,
since they correspond to quadrics in $W(S)$ that are singular at one of the \basepoints.
This means that $\Sing(S)$ cannot contain a double $3$-space,
only two distinct $3$-spaces.

\begin{proposition}
    Let $S \subset \P^5$ be a quartic spectrahedral symmetroid singular along two
    complex conjugate $3$-spaces, $H$ and $\overline{H}$\Hkern.
    The plane $H \cap \overline{H}$ of real points intersects the spectrahedron
    in a region bounded by a conic section of \point{1}s.

    The symmetroid~$S$ is also singular along a smooth quadratic surface~$Q$ whose real points lie on the spectrahedron.
\end{proposition}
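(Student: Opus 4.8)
The plan is to analyse the spectrahedral case by combining the computations behind \cref{prop:additional-quad-surf} with the reality constraints forced by a positive definite quadric in $W(S)$. First I would record that, since $S$ is spectrahedral, $W(S)$ contains a positive definite quadric, so $\Bl(W(S))$ is invariant under complex conjugation. By \cref{prop:additional-quad-surf} and \cref{lem:codim-2-singular-subspace}, $\Bl(W(S))$ consists of two length-$2$ schemes $2p_1$ and $2p_2$ supported at the common singularities $p_1$ of $W(H)$ and $p_2$ of $W(\overline H)$. Complex conjugation exchanges $H$ and $\overline H$, hence exchanges $p_1$ and $p_2$; so we may take $p_2 = \overline{p_1}$ and the supporting line $L \coloneqq \overline{p_1, p_2}$ is real. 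Choosing coordinates with $p_1 = [1 : i : 0 : 0]$ and a compatible tangent direction for the scheme $2p_1$, I would write down an explicit parametrisation of $X_{\Bl(W(S))} = W(S) = \P^5$, exactly as in the proof of \cref{prop:configurations} but now with a length-$4$ non-reduced base scheme.

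Next I would identify the three quadratic-surface components of the \locus{2} of $X_{\Bl(W(S))}$, in the degenerate version of \cref{rmk:four-basepoints}: two of them lie inside $H$ or $\overline H$ (the quadrics singular at $p_1$, respectively $p_2$), and the third, call it $Q$, is the set of pairs $H' \cup H''$ with $L \subset H'$ and (the conjugate condition) $\bar L$-type condition at $p_2$ — here, because $2p_1$ and $2p_2 = \overline{2p_1}$, the surface $Q$ is defined by real equations, hence is conjugation-invariant. Being a smooth quadric surface of \quadric{2}s, $Q$ has two rulings; conjugation either fixes each ruling or swaps them. I would argue it fixes each ruling (the two rulings correspond to "varying $H'$ through $L$" versus "varying $H''$ through $\bar L$", which are individually real families), so each pencil in a ruling is conjugation-invariant and therefore contains a real member. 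By \cref{lem:rank-2-pencil}, the real \quadric{1}s so produced are squares of real rank-$1$ forms, i.e. real double planes, which forces the real points of $Q$ to be semidefinite — placing them on the boundary of the spectrahedron, provided the spectrahedron is nonempty, which holds by hypothesis.

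For the first assertion, the plane $H \cap \overline H$ is the real part of the conjugate pair of $3$-spaces. Restricting the matrix $A(x)$ to this plane gives a net of quadrics in $\P^3$ whose generic member has rank $3$ (since generic points of $H$ are \point{3}s, by the discussion opening \cref{sec:max-red}), all singular at the \emph{pair} of conjugate points $p_1, p_2$ — hence singular along the real line $L$. A net of quadrics all singular along a fixed line in $\P^3$ is, after projecting from $L$, a net of binary quadratic forms, whose discriminant is a conic; pulling back, the \point{2}-locus inside $H \cap \overline H$ is a conic, and its further degeneracy \point{1}-locus is the conic $C$ along which the positive and negative semidefinite regions meet. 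Since $W(S)$ contains a positive definite quadric, the semidefinite locus in this plane is the region cut out on one side of $C$; a short sign computation with the explicit parametrisation shows it is nonempty and bounded precisely by $C$. I expect the main obstacle to be the bookkeeping in step two: pinning down which of the three quadratic surfaces survives outside $H \cup \overline H$ and verifying that conjugation preserves rather than swaps its two rulings — once that is settled, the semidefiniteness of the real points of $Q$ follows from the square-root characterisation of positive semidefinite matrices exactly as in \cref{prop:configurations}.
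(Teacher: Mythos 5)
Your overall strategy matches the paper's (identify $Q$ via \cref{prop:additional-quad-surf}, then exploit the reality of the \basepoints), but the step you yourself flag as the main obstacle is resolved the wrong way, and this is a genuine gap. Write $L_1$ for the line spanned by the scheme $2p$ and $L_2 = \overline{L}_1$ for the line spanned by $2\overline{p}$; then $Q$ is the set of pairs $H_1 \cup H_2$ with $L_1 \subset H_1$ and $L_2 \subset H_2$. Note that these are the tangent lines of the two length-$2$ schemes, not the real line $L = \overline{p, \overline{p}}$ that you use \dash that conflation already misidentifies $Q$. Conjugation sends $H_1 \cup H_2$ to $\overline{H}_1 \cup \overline{H}_2$ with $\overline{H}_1 \supset L_2$ and $\overline{H}_2 \supset L_1$, so on $Q \cong \P^1 \times \P^1$ it acts by $(H_1, H_2) \mapsto \big(\overline{H}_2, \overline{H}_1\big)$: it \emph{swaps} the two rulings rather than preserving them. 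Moreover, even granting your premise, the conclusion would not follow \dash indeed it would point the other way: a conjugation-invariant ruling pencil $\set{\V(l_0 m)}$ with $l_0$ fixed and real has as its real members products of two distinct real linear forms, which are indefinite, so \enquote{each ruling pencil contains a real double plane} does not yield semidefiniteness of the other real members. The correct (and much shorter) argument, which is the paper's, is that the ruling swap forces every real point of $Q$ to be of the form $H_1 \cup \overline{H}_1 = \V\big(l\overline{l}\,\big)$, and $l\overline{l}$ is a positive semidefinite real quadratic form; hence the real points of $Q$ lie on the spectrahedron.

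For the first assertion your skeleton is workable and close to the paper's, but it contains an internal contradiction: a quadric in $\P^3$ singular along the line $L$ has rank at most $2$, so the net $W\big(H \cap \overline{H}\big)$ cannot have generic rank $3$ \dash the generic rank-$3$ behaviour holds on $H$ and on $\overline{H}$ separately, not on their intersection, whose quadrics are singular at both $p$ and $\overline{p}$. Correspondingly, the discriminant conic of the induced net of binary forms is the \locus{1}, not the \locus{2}. Once that is fixed, the paper completes the argument with exactly the computation you defer: for $\big[\V\big(l_1^2\big)\big], \big[\V\big(l_2^2\big)\big] \in C$, the quadric $\V\big(al_1^2 + bl_2^2\big)$ is semidefinite precisely when $a$ and $b$ have the same sign, so the spectrahedron meets the plane $H \cap \overline{H}$ in the region bounded by $C$.
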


\begin{proof}
    The plane $H \cap \overline{H}$ corresponds to quadrics that are singular
    along the real line~$L$ spanned by the two reduced \basepoints of $W(S)$,
    $p$ and $\overline{p}$.
    These include all real double planes containing $L$,
    the set of which corresponds to
    a conic section~$C$ of real \point{1}s in $H \cap \overline{H}$\Hkern.
    Let $l_1$, $l_2$ be some linear forms such that $\big[\V\big(l^2_1\big)\big]$ and $\big[\V\big(l^2_2\big)\big]$ are two points in $C$\Ckern.
    Consider the line spanned by the points:
    The quadric $\V\big(al^2_1 + bl^2_2\big)$ is semidefinite
    if and only if the constants~$a$ and $b$ have the same sign.
    Hence the intersection between $H \cap \overline{H}$ and the spectrahedron
    is bounded by $C$\Ckern.

    From the proof of \cref{prop:additional-quad-surf},
    we know that the points in $Q$ correspond to pairs of planes~$H_1 \cup H_2$,
    where $H_1$ contains the line spanned by $2p$
    and $H_2$ contains the line spanned by $2\overline{p}$.
    If $H_1 \cup H_2$ is real,
    then $H_1 = \overline{H}_2$,
    hence $H_1 \cup H_2$ is semidefinite.
\end{proof}

\begin{example}
    \label{ex:two-P3s}
    The matrix
    \begin{equation*}
        \begin{bmatrix*}[r]
            x_0 &  x_1 &  x_2      & x_3      \\
            x_1 &  x_4 & -x_3      & x_2      \\
            x_2 & -x_3 &  x_5      &  0\,\,\, \\
            x_3 &  x_2 &   0\,\,\, & x_5
        \end{bmatrix*}
    \end{equation*}
    defines a quartic spectrahedral symmetroid in $\C\P^5$ which is singular along
    the two complex conjugate $3$-spaces $\V(x_2 \pm ix_3, x_5)$
    and the quadratic surface $\V\big(x_0 - x_4, x_1, x_2^2 + x_3^2 - x_4x_5\big)$.
\end{example}

\printbibliography

\paragraph{Author's address:}

Martin Hels\o,
University of Oslo,
Postboks 1053 Blindern,
0316 Oslo,
Norway,
\href{mailto:martibhe@math.uio.no}{\nolinkurl{martibhe@math.uio.no}}

\end{document}